\documentclass[12pt,oneside]{amsart}
\usepackage{amsmath,amssymb,latexsym,soul,cite,mathrsfs}
\usepackage[pagewise]{lineno}
\usepackage{color,enumitem,graphicx}
\usepackage[colorlinks=true,urlcolor=blue,
citecolor=red,linkcolor=blue,linktocpage,pdfpagelabels,
bookmarksnumbered,bookmarksopen]{hyperref}
\usepackage[english]{babel}
\usepackage{lineno}

\usepackage[left=2.4cm,right=2.8cm,top=2.5cm,bottom=2.4cm]{geometry}

\usepackage[hyperpageref]{backref}




\numberwithin{equation}{section}
\newtheorem{theorem}{Theorem}[section]
\newtheorem{lemma}[theorem]{Lemma}
\newtheorem{corollary}[theorem]{Corollary}
\newtheorem{proposition}[theorem]{Proposition}

\newtheorem{remark}[theorem]{Remark}

\renewcommand{\epsilon}{\varepsilon}

\renewcommand{\rightarrow}{\to}

\pretolerance=10000

\title{A gradient type term for the k-Hessian equation}

\author[M. Cardoso]{Mykael de Ara\'{u}jo Cardoso}

\author[J. \ de Brito Sousa]{Jefferson de Brito Sousa}

\author[J.F.\ de Oliveira]{Jos\'{e} Francisco de Oliveira}

\address[M. Cardoso]{
\newline\indent 
Department of Mathematics
\newline\indent
Federal University of Piau\'{\i}
\newline\indent 
64049-550 Teresina, PI, Brazil}
\email{\href{mailto:mykael@ufpi.edu.br}{mykael@ufpi.edu.br}}
\address[J. de Brito Sousa]{
\newline\indent
Department of Mathematics
\newline\indent
Federal University of Piau\'{\i}
\newline\indent
64049-550 Teresina, PI, Brazil}
\email{\href{mailto:jeffersonbrito2@gmail.com}{jeffersonbrito2@gmail.com}}

\address[J.F.\ de Oliveira]{
\newline\indent Department of Mathematics
	\newline\indent 
	Federal University of Piau\'{i}
	\newline\indent
	64049-550 Teresina, PI, Brazil}
\email{\href{mailto:jfoliveira@ufpi.edu.br}{jfoliveira@ufpi.edu.br}}
\thanks{The first author was supported by Conselho Nacional de Desenvolvimento Cient\'{i}fico e Tecnol\'{o}gico - CNPq and Funda\c{c}\~{a}o de Amparo \`{a} Pesquisa do Estado do Piau\'{i} - FAPEPI}
\thanks{The third author was partially supported by  CNPq grant number 309491/2021-5}

\subjclass[2020]{35J66, 35J60, 35J50, 35J96}
\keywords{Hessian equations; gradient type term; exponential growth, critical exponents}
\begin{document}

\maketitle

\begin{abstract}
In this paper, we propose a gradient type term for the $k$-Hessian equation that extends for $k>1$ the classical quadratic  gradient term associated with the Laplace equation. We prove that such as gradient term is invariant by the Kazdan-Kramer change of variables. As applications, we ensure the existence of solutions for a new class of $k$-Hessian equation in the sublinear and superlinear cases for Sobolev type growth. The threshold for existence is obtained in some particular cases.  In addition, for the Trudinger-Moser type growth regime, we also prove the existence of solutions under either subcritical or critical conditions.
\end{abstract}

\section{Introduction}
Let $\Omega\subset \mathbb{R}^n,n\ge 2$ be a bounded smooth domain and $u \in C^2(\Omega)$. For $k=1,2\cdots, n$,  let $S_k[u]$ be the $k$-th Hessian operator  which is defined by
$$
S_k[u]=[D^2 u ]_{k}$$
where $D^2 u$ is  the hessian matrix of $u$ and $[A]_{k}$ denotes the sum of all $k\times k$ principal minors of the matrix $A$. It is worth mentioning that although for $k=1$ we have $S_{1}[u]=\Delta u$ that is a linear operator,  $S_{k}[u]$ with $2\le k\le n$ represents a serial of nonlinear operators linking the Laplacian operator to the Monge Amp\`{e}re operator $S_{n}[u]=\det D^2u$.

To  ensure that the $k$-Hessian operator is elliptic, we must restrict the space function $C^2(\Omega)$. In fact, following \cite{caffarelli} we say a function $u\in C^2(\Omega)\cap C^0(\overline{\Omega})$ is $k$-admissible
function on $\Omega$ if $S_j[u]\ge 0$ for $j=1,\cdots,k$. We denote $\Phi^k(\Omega)$ the space of all $k$-admissible
functions and by $\Phi^{k}_{0}(\Omega)$ the functions $u\in\Phi^{k}(\Omega)$  such that $u_{\mid_{\partial\Omega}}=0$.
Although the $k$-Hessian operators are fully nonlinear, for $2\leq k \leq n$, they have a divergence structure
	$$
		S_k[u]=\dfrac{1}{k}\sum_{i,j=1}^{n}S_{k}^{ij}[u]u_{ij}=\dfrac{1}{k}\sum_{i,j=1}^{n}\partial_{i}(u_{j}S_{k}^{ij}[u]),
		$$ 
  where $u_i=u_{x_i}$, $u_{ij}=u_{x_ix_j}$, and $S_k^{ij}[u]=\dfrac{\partial}{\partial{u_{ij}}}S_k[u]$.
  This leads to various variational and potential theoretic properties which have been studied extensively by many authors and there is a vast literature. We emphasize that the $k$-Hessian operator plays an important role in geometric analysis, partial differential equations, and other branches of modern mathematics, see \cite{B.Guan,wsheng, TR1}.
 Historically, there are a lot of papers in the literature on the existence, regularity, and  qualitative properties of solutions for the $k$-Hessian equation. The pioneer authors L. Caffarelli, L. Nirenberg, J. Spruck \cite{caffarelli}, proved the existence and a priori estimates of smooth solutions to the $k$-Hessian equation. For recent results  we recommend \cite{wang,N.C,N.C2,sobre k-hess5,sobre k-hess6,sobre k-hess2, G.Dai,crit e subcritico} and the references therein.

At this point, we say a few words on the Kazdan-Kramer change of variables and the Laplace equation with quadratic gradient term. To do this, consider the problem
\begin{equation}\label{KK-problem}
    \left\{\begin{aligned}
   &-\Delta u=\vert\nabla u\vert ^2+ f(x,u) & \;\;&\mbox{in}&\;\;\Omega\\
   &u>0& \;\;&\mbox{in}&\;\;\Omega \\
& u=0& \;\;&\mbox{on}&\;\;\partial\Omega 
    \end{aligned}
    \right.
\end{equation}
where $\Delta u=S_{1}[u]$ is the Laplacian and $f:\Omega\times [0,\infty)\to [0,\infty)$ is a continuous function. This type of problem gained prominence after the works \cite{kazdan,MET2,MET14,JEANJEAN 1,JEANJEAN 2, MET1}. In part, this attention is motivated by the observation of Kazdan-Kramer \cite{kazdan} that the problem 
\eqref{KK-problem} can be transformed, through a change of variables, in a problem without dependence on the gradient term $\vert\nabla u\vert^2$ which brakes the line to consider new possibilities. Indeed, the change of variables $v=e^u-1$ turns the problem \eqref{KK-problem} in problem
	\begin{equation}\label{KK-problem2}
	\begin{cases}
		-\Delta v=h(x,v) & \mbox{in}\;\; \Omega\\
		\ v>0 & \mbox{in}\;\;  \Omega \\
		\ v=0 & \mbox{on}\;\; \partial \Omega 
	\end{cases}
	\end{equation}
where $h(x,v)=(1+v)f(x,\log(1+v))$.

In this paper, inspired by \cite{kazdan} and the recent development in \cite{ubilla}, we find a suitable term $H_k=H_k(x,u, \nabla u, D^2 u), k\ge 1$ to play the role of the quadratic term $\vert\nabla u\vert^2$ in \eqref{KK-problem}, when we use the $k$-Hessian operator $S_{k}[u], k\ge 2$ instead of the Laplacian $\Delta u$. In fact, we consider the equation
\begin{equation}
	\begin{cases}\label{equação 1.1}
		S_k[u]=g(u)H_{k}(x,u, \nabla u, D^2 u)+ f(x,u) & \mbox{in} \;\; \Omega\\
		\ u<0 & \mbox{in}\;\;  \Omega \\
		\ u=0  & \mbox{on}\;\;\partial \Omega 	
	\end{cases}
\end{equation}
where $g:(-\infty, 0]\to [0,\infty)$ and $f:\overline{\Omega}\times\mathbb{R}\to\mathbb{R}$ are continuous functions and the ``$k$-gradient natural term'' $H_k$ is given by
 \begin{equation}\label{Hk}
    H_k(x,u,\nabla u, D^{2}u)=\sum_{i=1}^{\binom{n}{k}}\sum_{t=1}^{k} \det([D^{2}u]_{i}\xleftarrow{t}u_{x_{i_t}}\nabla^{i}_{k}u),\;\;\mbox{with}\;\; \nabla^{i}_{k}u=\left[\begin{array}{c}
         u_{x_{i_1}}  \\
        u_{x_{i_2}}\\
         \vdots\\
         u_{x_{i_k}}
    \end{array}  \right],
 \end{equation} 
in which $[D^2u]_{i}$ , with  $i=1,...,$ $\binom{n}{k}$ represents the $i$-th principal submatrix of $D^2u$ of size $k$ determined by the index set $\alpha_i=\{i_1,\cdots, i_k\}\subset\{1, \cdots, n\}$   and $([D^{2}u]_{i}\xleftarrow{t}u_{x_{i_t}}\nabla^{i}_{k}u)$ denotes the matrix whose $t$-th column is $u_{x_{i_t}}\nabla^{i}_{k}u$ and whose remaining columns coincide with those of $[D^2u]_{i}$, see Lemma~\ref{lema1} below for details.

We note that, if $k=1$, the hessian matrix  $D^{2}u$ has exactly  $n$ principal submatrix of size $1$, $u_{x_{i_1}}\nabla^{i}_{1}u=[u^2_{x_{i_1}}]$  and  $([D^{2}u]_{i}\xleftarrow{1}u_{x_{i_1}}\nabla^{i}_{1}u)=[u^2_{x_{i_1}}]$. Hence,  $H_1(x,u,\nabla u, D^{2}u)=\vert\nabla u\vert^{2}$. In this sense,  the $k$-Hessian equation \eqref{equação 1.1} with the gradient type term $H_k$ in \eqref{Hk} extends  the problem \eqref{KK-problem} for $k\ge 2$.
Furthermore,  we prove that (cf. Section~\ref{section3}) the change of variable of Kazdan-Kramer type 
\begin{equation}\label{Ag-sec1}
    A_g(s)=-\int_{s}^{0}e^{G(t)}dt, \;\;\mbox{with}\;\; G(t)=\int_{t}^{0}g(\tau)d\tau
\end{equation} 
transforms the  equation \eqref{equação 1.1} into the $k$-Hessian equation without the gradient type term $H_k$
\begin{equation}
	\begin{cases}\label{equação 1.2}
		S_k[v]=h(x,v) & \mbox{in}\;\; \Omega\\
		\ v<0 & \mbox{in}\;\;  \Omega \\
		\ v=0 & \mbox{on}\;\; \partial \Omega,
	\end{cases}
\end{equation}
where 
\begin{equation}\label{h-transformed}
    h(x,s)=e^{kG(A^{-1}_{g}(s))}f(x,A^{-1}_{g}(s)).
\end{equation}
We will establish existence of solutions for the new $k$-Hessian equation \eqref{equação 1.1}  for  both Sobolev $k<n/2$ (cf. \cite{tso}) and Trudinger-Moser $k=n/2$ (cf.\cite{Tian}) cases. Non-existence result for $k<n/2$ is also proven.

\section{Statements of results}
Throughout this paper, the functions $f$ and $g$ are assumed to satisfy the following conditions:
\begin{itemize}
    \item [$(H_g)$] $g:(-\infty, 0]\to [0, \infty)$ is a continuous function
    \item [$(H_f)$] $f\in C^{0}(\overline{\Omega \times \mathbb{R}^{-}}) $, with $f(x,z)>0$ for $z<0$.
\end{itemize}
Let $\lambda_1>0$ be the first eigenvalue of the $k$-Hessian operator (cf. \cite{citekey 3}). We say that the pair $f,g$  is sublinear  if
\begin{equation}\label{f-sublinear}
    \lim_{z\rightarrow -\infty}\dfrac{e^{kG(z)}f(x,z)}{\left(\int_{z}^{0}e^{G(s)}ds\right)^{k}}<\lambda_1,\;\; \mbox{uniformly on $\overline{\Omega}$}
\end{equation}
in case it occurs
\begin{equation}\label{f-superlinear}
    \lim_{z\rightarrow -\infty}\dfrac{e^{kG(z)}f(x,z)}{\left(\int_{z}^{0}e^{G(s)}ds\right)^{k}}>\lambda_1,\;\; \mbox{uniformly on $\overline{\Omega}$}
\end{equation}
we say that $f, g$ is superlinear.

The conditions \eqref{f-sublinear} and \eqref{f-superlinear} ensure that $h$ given by \eqref{h-transformed} is sublinear and superlinear respectively, in the sense established in \cite{sobre k-hess3}, see Lemma~\ref{sub-super} below. Further, if $f$ is superlinear then the pair $f,g$ is superlinear too, see Remark~\ref{remark2}.
\subsection{The Sobolev case}
Throughout this section, we assume $\Omega\subset\mathbb{R}^n$ (bounded)  of class $C^{3,1}$ and the Sobolev condition $k<n/2$. In addition, we suppose  the following:
\begin{itemize}
    \item [$(H_{SC})$] If $k^*=n(k+1)/(n-2k)$ (cf. \cite{citekey 3,tso}) is the critical exponent for the embedding $\Phi^{k}_{0}(\Omega)\hookrightarrow L^{p+1}(\Omega)$, then we have
    \begin{equation}
        \lim_{z\rightarrow -\infty}\dfrac{e^{kG(z)}f(x,z)}{\left(\int_{z}^{0}e^{G(s)}ds\right)^{k^*-1}}=0,\;\; \mbox{uniformly on $\overline{\Omega}$},
    \end{equation}
	where $G(z)=\int_{z}^{0}g(\tau)d\tau$.
	\item [$(H_{AR})$] There exist constants $\theta>0$ and large $M$ such that
	\begin{equation}
		\dfrac{k+1}{1-\theta}\int_{z}^{0}e^{(k+1)G(s)}f(x,s) ds\leq e^{kG(z)}f(x,z)\int_{z}^{0}e^{G(s)}ds
	\end{equation}
	for any $z<-M$.
\end{itemize}
First, we  consider the superlinear case. 
\begin{theorem}[superlinear case]\label{thm2-sup} Suppose $f$ and $g$ satisfying  $(H_f)$, $(H_g)$ and such that the function $h$ given by \eqref{h-transformed} belongs to $C^{1,1}(\overline{\Omega \times \mathbb{R}^{-}})$. In addition, assume \eqref{f-superlinear}, $(H_{SC})$, $(H_{AR})$ and the following
\begin{equation}\label{s0}
\lim_{ z\rightarrow 0^-}\dfrac{f(x,z)}{\vert z\vert ^k}<\lambda_{1}\;\; \mbox{uniformly on $\overline{\Omega}$}.
\end{equation}
Then, the problem \eqref{equação 1.1} has a nontrivial solution $u\in C^{2}(\Omega)\cap C^{0}(\overline{\Omega})$.
\end{theorem}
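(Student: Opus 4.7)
The plan is to pull the entire problem back to the gradient–free $k$-Hessian equation \eqref{equação 1.2} via the Kazdan--Kramer type substitution \eqref{Ag-sec1} and then invoke an existence result of mountain pass type for $S_{k}[v]=h(x,v)$ in the class $\Phi^{k}_{0}(\Omega)$, as developed (for the superlinear subcritical setting) in \cite{sobre k-hess3}. Once a strict solution $v\in C^{2}(\Omega)\cap C^{0}(\overline{\Omega})$ of \eqref{equação 1.2} is produced, setting $u=A_{g}^{-1}(v)$ and using the results of Section~\ref{section3} returns the desired solution of \eqref{equação 1.1}.

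\textbf{Step 1: the transformed data $h$.} First I would invoke the invariance of \eqref{equação 1.1} under $v=A_{g}(u)$ (proved in Section~\ref{section3}) to reduce to \eqref{equação 1.2} with $h$ given by \eqref{h-transformed}. Since $A_{g}$ is a $C^{2}$ strictly increasing diffeomorphism from $(-\infty,0]$ onto itself and $h\in C^{1,1}(\overline{\Omega\times\mathbb{R}^{-}})$ by hypothesis, the regularity framework of \cite{sobre k-hess3} applies to \eqref{equação 1.2}.

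\textbf{Step 2: translating the hypotheses.} The next task is to verify that $h$ satisfies, with $v<0$, the four structural conditions needed in \cite{sobre k-hess3}: (a) sublinearity at $0$, (b) superlinearity at $-\infty$, (c) subcritical growth with respect to the exponent $k^{*}$, and (d) an Ambrosetti--Rabinowitz condition with constant strictly larger than $k+1$. The change of variable $t=A_{g}(s)$ gives $dt=e^{G(s)}ds$, so that
\begin{equation*}
\int_{v}^{0}h(x,t)\,dt=\int_{A_{g}^{-1}(v)}^{0}e^{(k+1)G(s)}f(x,s)\,ds,\qquad -v=\int_{A_{g}^{-1}(v)}^{0}e^{G(s)}\,ds,
\end{equation*}
and $h(x,v)=e^{kG(A_{g}^{-1}(v))}f(x,A_{g}^{-1}(v))$. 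With these three identities, condition \eqref{s0} on $f$ yields $\limsup_{v\to 0^{-}}h(x,v)/|v|^{k}<\lambda_{1}$, condition \eqref{f-superlinear} becomes the corresponding superlinearity of $h$ at $-\infty$ (this is essentially Lemma~\ref{sub-super}), the subcritical hypothesis $(H_{SC})$ yields $\lim_{v\to-\infty}h(x,v)/|v|^{k^{*}-1}=0$, and $(H_{AR})$ rewrites exactly as
\begin{equation*}
\frac{k+1}{1-\theta}\int_{v}^{0}h(x,t)\,dt\le -v\,h(x,v)\qquad\text{for }v<-M',
\end{equation*}
i.e.\ the standard AR condition for the $k$-Hessian with constant $(k+1)/(1-\theta)>k+1$.

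\textbf{Step 3: solving the transformed problem and returning.} With (a)--(d) verified, the mountain pass / variational existence theorem for $S_{k}[v]=h(x,v)$ in $\Phi^{k}_{0}(\Omega)$ (the subcritical superlinear case of \cite{sobre k-hess3}, or \cite{G.Dai,crit e subcritico,wang}) supplies a non-trivial $k$-admissible solution $v$ with $v<0$ in $\Omega$ and $v=0$ on $\partial\Omega$, of class $C^{2}(\Omega)\cap C^{0}(\overline{\Omega})$. Setting $u:=A_{g}^{-1}(v)$, the computation of Section~\ref{section3} (which shows that $A_{g}$ maps solutions of \eqref{equação 1.1} bijectively onto solutions of \eqref{equação 1.2}) produces the desired solution of \eqref{equação 1.1}; the boundary condition and sign are preserved because $A_{g}^{-1}(0)=0$ and $A_{g}^{-1}$ is increasing, and $k$-admissibility is preserved by the invariance property proved in Section~\ref{section3}.

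\textbf{Main obstacle.} The bookkeeping in Step~2 is delicate: one must check that the exponents match so that $(H_{SC})$ gives exactly subcritical growth relative to $k^{*}-1$ for $h$, and that the factor $(k+1)/(1-\theta)$ in $(H_{AR})$ is precisely what is needed to ensure the Palais--Smale (or dilation) compactness used by the abstract existence theorem. The geometric/variational step on the transformed equation is essentially classical, but confirming that the growth of $|A_{g}^{-1}(v)|$ as $v\to-\infty$ is comparable to $|v|$ up to multiplicative factors that do not spoil $(H_{SC})$ is the non-trivial part; it rests on the asymptotics of $A_{g}$ derived from $g\ge 0$ and is the place where one uses the uniformity over $\overline{\Omega}$ in \eqref{f-superlinear} and $(H_{SC})$.
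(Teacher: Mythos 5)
Your proposal is correct and follows essentially the same route as the paper: reduce via the Kazdan--Kramer substitution (Proposition~\ref{proposição 2.1}), verify that $h$ inherits the four structural hypotheses of \cite[Theorem~1.2]{sobre k-hess3} through the exact change of variable $t=A_g^{-1}(s)$ (the content of Lemmas~\ref{sub-super} and~\ref{HAR}), and pull the solution back by $u=A_g^{-1}(v)$. A minor inaccuracy in your ``main obstacle'' remark: no asymptotic comparability of $|A_g^{-1}(v)|$ with $|v|$ at $-\infty$ is actually required, since $(H_{SC})$ and $(H_{AR})$ are formulated so as to transform \emph{exactly} under the substitution; the only asymptotic estimate used is $tA_g^{\prime}(t)/A_g(t)\to 1$, and that is needed only at the origin.
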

For the sublinear case we prove the following:
\begin{theorem}[sublinear case]\label{thm1-sub}	Suppose $f$ and $g$ satisfying  $(H_f)$, $(H_g)$ and such that the function $h$ given by \eqref{h-transformed} belongs to $C^{1,1}(\overline{\Omega} \times \mathbb{R}^{-})\cap C^{0}(\overline{\Omega\times\mathbb{ R^{-}}})$. Furthermore, assume \eqref{f-sublinear} and the following
\begin{equation}\label{s011}
\lim_{ z\rightarrow 0^-}\dfrac{f(x,z)}{\vert z\vert ^k}>\lambda_{1}\;\; \mbox{uniformly on $\overline{\Omega}$}.
\end{equation}
Then,  the problem \eqref{equação 1.1} has a nontrivial solution $u\in C^{2}(\Omega)\cap C^{0}(\overline{\Omega})$.
\end{theorem}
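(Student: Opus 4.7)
The plan is to use the Kazdan--Kramer type substitution $v=A_g(u)$ defined in \eqref{Ag-sec1} to reduce \eqref{equação 1.1} to the gradient-free equation \eqref{equação 1.2} with transformed datum $h$ as in \eqref{h-transformed}, then invoke a known existence theorem for the sublinear $k$-Hessian equation (in the spirit of \cite{sobre k-hess3}) to produce a solution $v$ of \eqref{equação 1.2}, and finally transfer back via $u=A_g^{-1}(v)$. Since $A_g'(s)=e^{G(s)}>0$, the map $A_g:(-\infty,0]\to(-\infty,0]$ is a $C^1$ increasing bijection, so $A_g^{-1}$ exists, is $C^1$, and preserves both sign and the boundary value at $0$.

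The main analytic task is to verify that the hypotheses of the chosen sublinear existence result for \eqref{equação 1.2} are met by $h$. The asymptotic behaviour at $-\infty$ is exactly Lemma~\ref{sub-super} applied to \eqref{f-sublinear}, giving
\[
\limsup_{v\to -\infty}\frac{h(x,v)}{|v|^k}<\lambda_1 \quad \text{uniformly on } \overline{\Omega}.
\]
For the behaviour at the origin, $G(0)=0$ forces $A_g'(0)=1$, so $A_g^{-1}(v)/v\to 1$ and $e^{kG(A_g^{-1}(v))}\to 1$ as $v\to 0^-$; hence
\[
\lim_{v\to 0^-}\frac{h(x,v)}{|v|^k}=\lim_{u\to 0^-}\frac{f(x,u)}{|u|^k}>\lambda_1
\]
uniformly on $\overline{\Omega}$ by \eqref{s011}. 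Together with the assumed $C^{1,1}$ regularity of $h$, these are the standard sublinear assumptions required by the cited existence framework.

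Having verified the hypotheses, I would apply the sublinear existence theorem to obtain a nontrivial $k$-admissible solution $v\in C^2(\Omega)\cap C^0(\overline{\Omega})$ of \eqref{equação 1.2} with $v<0$ in $\Omega$. Setting $u:=A_g^{-1}(v)$ and invoking the transformation identities proved in Section~\ref{section3}, which display how $H_k$ arises naturally under the substitution, one obtains a $k$-admissible $u\in C^2(\Omega)\cap C^0(\overline{\Omega})$ with $u<0$ in $\Omega$ and $u=0$ on $\partial\Omega$ that solves \eqref{equação 1.1}; nontriviality is automatic from $u<0$ in $\Omega$.

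The principal obstacle is ensuring that the composite $u=A_g^{-1}\circ v$ remains $k$-admissible and that differentiating it through the nonlinear $A_g^{-1}$ reproduces \emph{exactly} the gradient term $H_k$ defined in \eqref{Hk}; this is where the structural identities of Section~\ref{section3} are essential, as they justify the equivalence of \eqref{equação 1.1} and \eqref{equação 1.2} under the substitution. By contrast, the uniformity in $x$ of the limits at $-\infty$ and $0^-$ poses no difficulty, because $A_g^{-1}$ does not depend on $x$, so \eqref{f-sublinear} and \eqref{s011} transfer cleanly to corresponding uniform statements for $h$.
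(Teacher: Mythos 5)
Your proposal is correct and follows essentially the same route as the paper: both reduce \eqref{equação 1.1} to the gradient-free problem \eqref{equação 1.2} via Proposition~\ref{proposição 2.1}, verify that the transformed nonlinearity $h$ satisfies the sublinear hypotheses \eqref{psis02} and \eqref{psi-sublinear} using Lemma~\ref{sub-super}, and then invoke the sublinear existence theorem of Chou--Wang before pulling the solution back through $A_g^{-1}$. Your direct computation of the limit at $0^-$ simply replicates the first identity in Lemma~\ref{sub-super}, so there is no genuine difference in method.
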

Theorems \ref{thm2-sup} and \ref{thm1-sub} improve and complement \cite[Theorems~ 1.2 and 1.3]{sobre k-hess3} by including the gradient type term $H_k$. Also, these results complement for $k>1$ earlier  ones for the Laplace operator $k=1$ in \cite{MET2,MET14,JEANJEAN 1, JEANJEAN 2}. For related problems involving  
$p$-Laplacian operator we recommend \cite{ubilla,MET21,MET30} and references therein.

We observe that the pair of functions $f(x,z)=(e^{-z}-1)^pe^{kz}$  and $g\equiv 1$ satisfies the assumptions of Theorem~\ref{thm1-sub} for $1\le p<k$ and  the hypothesis of Theorem~\ref{thm2-sup} if $p \in (k,k^{*}-1)$. Thus, as by-product of Theorems \ref{thm2-sup} and \ref{thm1-sub} we have the following:
\begin{corollary}\label{co-exist} Assume  $f(x,z)=(e^{-z}-1)^pe^{kz}$ for $1\le p <k^*-1$, $p\not=k$  and $g\equiv 1$. Then the problem \eqref{equação 1.1} admits  a nontrivial solution $u\in C^{2}(\Omega)\cap C^{0}(\overline{\Omega})$    
\end{corollary}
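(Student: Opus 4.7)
\emph{Plan.} The argument is a direct verification that Theorems~\ref{thm2-sup} and \ref{thm1-sub} apply to the explicit pair $g\equiv 1$, $f(x,z)=(e^{-z}-1)^p e^{kz}$. First I would make the Kazdan--Kramer data explicit. With $g\equiv 1$ one has $G(t)=-t$ and $\int_s^0 e^{G(t)}\,dt=e^{-s}-1$, whence $A_g(s)=1-e^{-s}$ and $A_g^{-1}(v)=-\log(1-v)$ for $v\le 0$. Substituting into \eqref{h-transformed} gives the transparent formula $h(x,v)=(-v)^p=|v|^p$, which is smooth on $\mathbb{R}^-$ and, since $p\ge 1$, extends continuously to $v=0$; for $p\ge 2$ it is moreover $C^{1,1}$ up to the boundary, which covers the superlinear range under the standing assumption $k\ge 2$ since then $p>k\ge 2$ automatically.

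Next I would reduce the asymptotic hypotheses to one-line inequalities. The key observation is that
\begin{equation*}
e^{kG(z)}f(x,z)=(e^{-z}-1)^p\quad\text{and}\quad \Bigl(\textstyle\int_z^0 e^{G(s)}\,ds\Bigr)^{\!q}=(e^{-z}-1)^q,
\end{equation*}
so the ratios appearing in \eqref{f-sublinear}, \eqref{f-superlinear}, and $(H_{SC})$ collapse to pure powers of $e^{-z}-1$; their limits as $z\to-\infty$ are dictated by the sign of the exponent. Similarly, $f(x,z)/|z|^k\sim |z|^{p-k}$ as $z\to 0^-$ settles \eqref{s0} and \eqref{s011}. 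The sublinear range $1\le p<k$ therefore yields \eqref{f-sublinear} and \eqref{s011}, while the superlinear range $k<p<k^*-1$ yields \eqref{f-superlinear}, \eqref{s0}, and (because $p-(k^*-1)<0$) also $(H_{SC})$.

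The only genuine computation is $(H_{AR})$ in the superlinear case. Via the substitution $u=e^{-s}-1$ the left-hand integral evaluates to $(e^{-z}-1)^{p+1}/(p+1)$, while the right-hand side equals $(e^{-z}-1)^{p+1}$, so the inequality holds on all of $(-\infty,0]$ with any $\theta\in\bigl(0,(p-k)/(p+1)\bigr)$, a non-empty interval precisely because $p>k$. Once these checks are in place, Theorems~\ref{thm2-sup} and \ref{thm1-sub} produce the required solution $u\in C^{2}(\Omega)\cap C^{0}(\overline{\Omega})$. The one place where care is needed is the $C^{1,1}$ regularity of $h$ at $v=0$ in the superlinear case; this is the main potential obstacle, but under the working hypothesis $k\ge 2$ the constraint $p>k$ forces $p>2$ and the regularity comes for free, leaving nothing further to verify.
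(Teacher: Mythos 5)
Your proof is correct and follows the same route as the paper, namely a direct verification that the pair $f,g$ satisfies the hypotheses of Theorem~\ref{thm1-sub} when $1\le p<k$ and of Theorem~\ref{thm2-sup} when $k<p<k^*-1$; the paper merely states this verification as an observation immediately before the corollary without carrying out the computations. Your explicit evaluation $h(x,v)=(-v)^p$, the reduction of the asymptotic conditions to the sign of $p-k$ and $p-(k^*-1)$, and the closed-form check of $(H_{AR})$ with $\theta\in\bigl(0,(p-k)/(p+1)\bigr)$ are all accurate. You also correctly flag the one point the paper leaves implicit: the requirement $h\in C^{1,1}$ at $v=0$ in the superlinear case fails for $1<p<2$, which can only be excluded by invoking the paper's standing restriction to $k\ge2$ (so that $p>k\ge 2$), and that the sublinear theorem only asks for $C^{1,1}$ away from $v=0$, so no such restriction is needed there.
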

The non-existence result below shows that the strict condition $p<k^{*}-1$ in the Corollary~\ref{co-exist} is the threshold for the existence at least  for that pair of $f$ and $g$. 

\begin{theorem}\label{non-ex}
 	Suppose that  $\Omega$ is star-shaped with respect to the origin and let $f(x,z)=(e^{-z}-1)^pe^{kz}$ and $g\equiv1$. Then the problem \eqref{equação 1.1} has no negative solution in $C^1(\overline{\Omega})\cap C^4(\Omega)$ when $p\geq k^*-1$.
\end{theorem}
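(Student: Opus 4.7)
The strategy is to use the Kazdan--Kramer transformation to reduce \eqref{equação 1.1} to a pure $k$-Hessian equation, then apply a Pohozaev-type identity.

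\emph{Step 1: Reduction.} With $g\equiv 1$ we have $G(t)=-t$, so $A_g(s)=1-e^{-s}$ and $A_g^{-1}(v)=-\log(1-v)$. Substituting $f(x,z)=(e^{-z}-1)^p e^{kz}$ into the formula \eqref{h-transformed} causes the exponential factors to collapse, giving $h(x,v)=|v|^p$. Hence, if $u\in C^1(\overline\Omega)\cap C^4(\Omega)$ is a negative solution of \eqref{equação 1.1}, then $v:=A_g(u)\in C^1(\overline\Omega)\cap C^4(\Omega)$ is a negative $k$-admissible solution of the model equation
$$S_k[v]=|v|^p \;\;\text{in } \Omega, \qquad v=0 \;\;\text{on } \partial\Omega.$$

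\emph{Step 2: Pohozaev identity.} Multiply $S_k[v]=|v|^p$ by $x\cdot\nabla v$ and integrate by parts, exploiting the divergence structure $S_k[v]=\frac{1}{k}\partial_i(v_j S_k^{ij}[v])$. Combined with $\nabla v=v_\nu\nu$ on $\partial\Omega$ (since $v|_{\partial\Omega}=0$) and the boundary identity $S_k^{ij}[v]\nu_i\nu_j|_{\partial\Omega}=v_\nu^{k-1}\sigma_{k-1}(\kappa)$ (a Reilly-type computation, cf.\ Tso, Chou--Wang), this yields
$$n\int_\Omega F(v)\,dx \;-\; \frac{n-2k}{k+1}\int_\Omega v\,|v|^p\,dx \;=\; -\frac{1}{k+1}\int_{\partial\Omega}(x\cdot\nu)\,v_\nu^{k+1}\,\sigma_{k-1}(\kappa)\,d\sigma,$$
where $F(v)=\int_0^v|s|^p\,ds=-|v|^{p+1}/(p+1)$. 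By star-shapedness, $x\cdot\nu\geq 0$ on $\partial\Omega$; the $k$-admissibility of $v$ with $v|_{\partial\Omega}=0$ forces $\sigma_{k-1}(\kappa)\geq 0$; and a Hopf-type boundary principle gives $v_\nu>0$. So the right-hand side is $\leq 0$.

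\emph{Step 3: Contradiction.} Substituting $v\,|v|^p=-|v|^{p+1}$ and the value of $F(v)$ turns the identity into
$$\Bigl[\frac{n-2k}{k+1}-\frac{n}{p+1}\Bigr]\int_\Omega|v|^{p+1}\,dx \;\leq\; 0.$$
The bracket vanishes exactly when $p+1=n(k+1)/(n-2k)=k^*$ and is strictly positive when $p>k^{*}-1$. In the strictly supercritical case $p>k^{*}-1$ the left side is positive since $v\not\equiv 0$, a contradiction. In the critical case $p=k^{*}-1$, both sides vanish, which with $x\cdot\nu>0$ and $\sigma_{k-1}(\kappa)>0$ forces $v_\nu\equiv 0$ on $\partial\Omega$; together with $v|_{\partial\Omega}=0$ a Cauchy--Kovalevskaya / unique continuation argument for the elliptic operator $S_k$ then gives $v\equiv 0$, contradicting $v<0$ in $\Omega$.

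\emph{Main obstacle.} The technical core is the rigorous derivation of the Pohozaev identity above for the $k$-Hessian---in particular, the reduction of the boundary term to $v_\nu^{k+1}\,\sigma_{k-1}(\kappa)$ via the identity $S_k^{ij}[v]\nu_i\nu_j=v_\nu^{k-1}\sigma_{k-1}(\kappa)$, and the verification that $\sigma_{k-1}(\kappa)\geq 0$ on $\partial\Omega$ whenever a $k$-admissible $v$ vanishes there. Once the identity is in place, the remainder of the proof is a direct sign comparison.
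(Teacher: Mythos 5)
Your proposal follows the same overall route as the paper: reduce via the Kazdan--Kramer change of variables to the model equation $S_k[v]=|v|^p$, then apply a Pohozaev/Pucci--Serrin identity and exploit star-shapedness. The paper packages the second step as a black box: it cites Tso's Proposition~1 (which is exactly the $k$-Hessian Pohozaev identity built on the Pucci--Serrin general variational identity), formulates a general non-existence criterion for the transformed nonlinearity $h$ (Lemma~\ref{lema-nonex}), and then simply checks the sign $-\frac{n}{p+1}+\frac{n-2k}{k+1}\ge 0$ for $h(x,s)=(-s)^p$ when $p\ge k^*-1$. You instead re-derive the identity and its boundary term by hand, which is correct but exactly the content of the cited Tso/Pucci--Serrin machinery; the technical ``main obstacle'' you flag (the reduction of the boundary flux to $v_\nu^{k+1}\sigma_{k-1}(\kappa)$ and its sign) is precisely what that citation disposes of.

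One point in your Step~3 is muddled and should be cleaned up. In Step~2 you already invoke Hopf to get $v_\nu>0$ on $\partial\Omega$, so in the critical case $p=k^*-1$, once the identity forces $v_\nu\equiv 0$ on a set where $x\cdot\nu>0$ and $\sigma_{k-1}(\kappa)>0$, you have an immediate contradiction; the subsequent appeal to a Cauchy--Kovalevskaya / unique continuation argument for $S_k$ is both superfluous and delicate to justify for this fully nonlinear, degenerate-elliptic operator (Tso himself does not use it). Also worth noting: the paper makes explicit (see the Remark after the theorem) that the additional boundary hypothesis $\langle x,\nu\rangle>0$ is only needed in the borderline case $p=k^*-1$; for $p>k^*-1$ the strict pointwise inequality in Tso's Proposition~1 already suffices under mere star-shapedness ($x\cdot\nu\ge 0$). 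Your write-up matches this split, so aside from the superfluous unique-continuation appeal the argument is sound.
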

The proof of  Theorem~\ref{non-ex} relies on the Pucci-Serrin general identity \cite{Pucci-Serrin} which can be applied for a general pair $f,g$. We choose the pair  $f(x,z)=(e^{-z}-1)^pe^{kz}$ and $g\equiv1$ just to be clear that, in some sense, the gradient type term in \eqref{Hk} preserves the the criticality derived from the exponent $k^*$, see for instance \cite{tso}. 
\begin{remark}
  In Theorem~\ref{non-ex} we are assuming the boundary condition $\langle x,\nu\rangle>0$ on $\partial \Omega$, where  $\nu$ is the unit outer normal, but it is necessary only for the limit case $p=k^*-1$. See Lemma~\ref{lema-nonex} below. 
\end{remark}
\subsection{The Trudinger-Moser case}
\noindent In this section, we assume $\Omega=B\subset\mathbb{R}^n$ the unit ball and the Trudinger-Moser  condition $k=n/2$.  
\begin{enumerate}
    \item [$(\mathcal{H}_f)$] $f\in C^{0}(\overline{B} \times \mathbb{R}, [0,\infty)) $, $x\mapsto f(x, s)$ is a radially symmetric function, and $f\equiv 0$ on $\overline{B} \times [0, \infty)$.
     \item [$(\mathcal{H}_{AR})$] There exist  $\vartheta>k+1$, $0<r_1<r_2<1$ and $z_0>0$ such that for $z<-z_0$
        \begin{equation}\nonumber
     \begin{aligned}
		&\vartheta \int_{z}^{0}e^{(k+1)G(s)}f(x,s) ds\leq e^{kG(z)}f(x,z)\int_{z}^{0}e^{G(s)}ds &\mbox{if} &\;x\in\overline{B}\\
		&\int_{z}^{0}e^{(k+1)G(s)}f(x,s) ds>0\;\;&\mbox{if} &\; x\in B_{r_2}\setminus B_{r_1},
		\end{aligned}
	\end{equation}
		where $G(z)=\int_{z}^{0}g(\tau)d\tau$.
    \item [$(\mathcal{H}_{AR_1})$] There exist constants $L, M > 0$ such that  
    $$0 < \int_{z}^{0}e^{(k+1)G(s)}f(x,s) ds \leq M e^{kG(z)}f(x,z),$$
    for all $z< -L$ and $x \in \overline{B}$.
\end{enumerate}
\noindent Inspired by \cite{DMRuf} we say that the pair $f,g$ has subcritical exponential growth if for any $\alpha>0$
\begin{equation}\label{e-sub}
    \lim_{ s\rightarrow -\infty} \dfrac{e^{kG(s)}f(x,s)}{e^{\alpha\left(\int_s^0 e^{G(t)}dt\right)^{\frac{n+2}{n}}}}=0,\;\; \mbox{uniformly on $\overline{B}$.}
\end{equation}
Further, the pair $f,g$ has critical exponential growth  if there exists $\alpha_0>0$ such that
\begin{equation}\label{fe-cri}
\lim_{s\rightarrow -\infty} \dfrac{e^{kG(s)}f(x,s)}{e^{\alpha \left(\int_s^0 e^{G(t)}dt\right)^{\frac{n+2}{n}}}}=\left\{
\begin{aligned}
           &0,   \;\;&\mbox{for all}&\;\;  \alpha>\alpha_0\\
		&+\infty, \;\;&\mbox{for all}&\;\;  \alpha<\alpha_0  \\
\end{aligned}
\right.
\end{equation}
uniformly for $x \in \overline{B}$.

This notion of criticality is motivated by the Trudinger-Moser type inequality for $k$-Hessian operator, see \cite{Tian}. Actually, we have that the condition \eqref{e-sub} (or  \eqref{fe-cri}) on the pair $f,g$  means that the transformed function $h$ in \eqref{h-transformed} has subcritical exponential growth (or critical exponential growth) in the sense established in \cite{DMRuf,crit e subcritico}, see Lemma~\ref{H-FG}-$(a)$ below.

Denote by $X_0$ the set of all locally absolutely continuous functions $u:(0,1]\rightarrow \mathbb{R}$ satisfying $u(1)=0$ and $ \int_{0}^{1} r^{n-k}\vert u^{\prime}\vert^{k+1}dr<\infty$. Let us consider the constant $\Lambda_1>0$ (cf. \cite{crit e subcritico}) defined by 
\begin{equation}\label{Lambda1}
    \Lambda_1= \inf_{u \in X_0\setminus\{0\} }   \frac{c_n\int_0^1 r^{n-k}\vert u^{\prime}\vert^{k+1}dr}{\tau\int_0^1 r^{n-1}\vert u\vert ^{k+1}dr},
\end{equation}
where $c_ n=\frac{\omega_{n-1}}{k}\binom{n-1}{k-1}$, with $\omega_{n-1}$ being the surface area of the unit sphere in $\mathbb{R}^n$ and $\tau=\omega_{n-1}$.
\begin{theorem}[subcritical case] \label{thm2} Assume that $f$ and $g$ satisfies $(H_g)$ $(\mathcal{H}_f)$, $(\mathcal{H}_{AR})$ and \eqref{e-sub}. Furthermore, assume that
\begin{equation}\label{super00}
 \limsup_{s\rightarrow 0^{-}} \dfrac{e^{kG(s)}f(x,s)}{\left(\int_s^0 e^{G(t)}dt\right)^{k}}<\Lambda_1.
 \end{equation}
  Then (\ref{equação 1.1}) admits a radially symmetric  solution $u\in C^{2}(B)\cap C^{0}(\overline{B})$.
\end{theorem}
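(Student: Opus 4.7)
The plan is to transform the problem via the Kazdan--Kramer-type substitution $v=A_g(u)$ established in Section~\ref{section3} and then solve the resulting gradient-free $k$-Hessian equation \eqref{equação 1.2} by variational means in the radial framework of \cite{crit e subcritico}. Since $g\ge 0$, the map $A_g:(-\infty,0]\to(-\infty,0]$ is a $C^1$-diffeomorphism with $A_g'(s)=e^{G(s)}>0$, so any negative admissible solution $v$ of \eqref{equação 1.2} produces, via $u=A_g^{-1}(v)$, a solution of \eqref{equação 1.1} in the required class. By Lemma~\ref{H-FG}, the subcritical growth \eqref{e-sub} translates into subcritical exponential growth of $h$ in the sense of \cite{DMRuf,crit e subcritico}. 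A direct computation using $A_g'(s)=e^{G(s)}$ and $(A_g^{-1})'(t)=e^{-G(A_g^{-1}(t))}$ shows that \eqref{super00} becomes $\limsup_{t\to 0^-} h(x,t)/|t|^{k}<\Lambda_1$, while $(\mathcal H_{AR})$ becomes an Ambrosetti--Rabinowitz-type condition $\vartheta H(x,t)\le h(x,t)\,t$ for $-t$ large, with $H(x,t)=\int_t^0 h(x,s)\,ds$ strictly positive in $B_{r_2}\setminus B_{r_1}$.

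Since $h(\cdot,t)$ is radially symmetric, I would look for radial solutions $v=v(r)$ in the space $X_0$ and work with the standard variational formulation for the radial $k$-Hessian
\begin{equation*}
    I(v)=\frac{c_n}{k+1}\int_0^1 r^{n-k}|v'|^{k+1}\,dr-\tau\int_0^1 r^{n-1}H(r,v)\,dr .
\end{equation*}
The mountain-pass geometry follows from the translated hypotheses: the inequality $\limsup_{t\to 0^-} h(r,t)/|t|^{k}<\Lambda_1$ together with the definition \eqref{Lambda1} of $\Lambda_1$ and a small-$\varepsilon$ exploitation of the subcritical growth \eqref{e-sub} yields $I(v)\ge\alpha>0$ on a sphere $\|v\|=\rho$; the Ambrosetti--Rabinowitz condition and the annular positivity of $H$ give an element $e\in X_0$ compactly supported in $B_{r_2}\setminus B_{r_1}$ with $I(te)\to-\infty$ as $t\to+\infty$, producing $I(e)<0$ and $\|e\|>\rho$. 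The minimax value $c$ is then positive.

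Verifying the Palais--Smale condition at level $c$ is the decisive step. Boundedness of a PS sequence $\{v_j\}\subset X_0$ follows from the AR condition in the usual way. Passing to a subsequence, one has weak convergence $v_j\rightharpoonup v$ in $X_0$, together with pointwise a.e.\ and strong convergence in every $L^q$ with $q<\infty$ that the radial compact embedding allows. The key point is that the subcritical exponential growth \eqref{e-sub}, combined with the weighted Trudinger--Moser inequality of \cite{Tian,crit e subcritico}, lets one dominate $r^{n-1}h(r,v_j)\varphi$ by an equi-integrable family, so that $\int r^{n-1}h(r,v_j)\varphi\to\int r^{n-1}h(r,v)\varphi$ for every $\varphi\in X_0$. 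Passing to the limit in $I'(v_j)\to 0$ yields $I'(v)=0$, and a standard Brezis--Lieb-type argument upgrades weak convergence to norm convergence, producing a nontrivial critical point $v\neq 0$.

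Finally, the sign $v<0$ in $B$ follows from the comparison principle for the $k$-Hessian applied to the weak solution of $S_k[v]=h(x,v)$ with $h\ge 0$, $h\not\equiv 0$, and the $C^{1,1}$-regularity of $h$ (inherited from that of $f,g$ through \eqref{h-transformed}) together with the interior regularity theory of \cite{caffarelli} upgrades $v$ to $C^2(B)\cap C^0(\overline B)$. Setting $u=A_g^{-1}(v)$ concludes the argument. The main obstacle is the Palais--Smale verification: while subcritical growth formally rules out concentration, one must use the weighted 1D Trudinger--Moser inequality carefully to bypass the lack of Sobolev compactness in the exponential regime, a difficulty the assumption \eqref{e-sub} is precisely designed to handle.
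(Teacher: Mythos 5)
Your approach matches the paper's exactly: transform via $v=A_g(u)$ (Proposition~\ref{proposição 2.1}), translate the hypotheses via Lemmas~\ref{lf1}--\ref{H-FG} so that the reduced nonlinearity $h$ satisfies the assumptions $(\varphi_0)$, $(\varphi_1)$, \eqref{varphi-e-sub}, \eqref{varphi-oriLam} of the radial subcritical-growth existence theorem of \cite{crit e subcritico}, and pull back via $u=A_g^{-1}(v)$. The only deviation is that the paper simply cites that existence theorem as a black box, while you re-sketch its mountain-pass/Palais--Smale argument; also note that under $(\mathcal H_f)$ you may not invoke $C^{1,1}$-regularity of $h$ (only continuity of $f$, hence of $h$, is assumed in the Trudinger--Moser case), but this is harmless since the radial variational framework of \cite{crit e subcritico} does not require it.
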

\noindent In order to ensure our existence result for critical exponential growth, we also need the condition
\begin{equation}\label{fmin-max}
    \lim_{s \rightarrow - \infty} \dfrac{e^{kG(s)}\vert s\vert f(x,s)}{e^{\alpha_0 \left(\int_s^0 e^{G(t)}dt\right)^{\frac{n+2}{n}}}}=b_0>\dfrac{1}{e^{1+\frac{1}{2}+\cdots+\frac{1}{k}}}\left(\dfrac{\alpha_n}{\alpha_0}\right)^{\frac{n}{2}}\dfrac{n}{\tau}
\end{equation}
uniformly in $x \in \overline{B}$, where $\alpha_n=n[\frac{\omega_{n-1}}{k}\binom{n-1}{k-1}]^{\frac{2}{n}}=nc_n^{\frac{2}{n}}$ is the critical Moser's constant for Trudinger-Moser type inequality for $k$-Hessian operator, see \cite{Tian}.
\begin{theorem}[critical case]\label{thm3}
  Assume $(H_g)$, $(\mathcal{H}_f)$, $(\mathcal{H}_{AR})$, $(\mathcal{H}_{AR_1})$,  \eqref{fe-cri}, \eqref{super00} and \eqref{fmin-max}.
  Then (\ref{equação 1.1}) admits a radially symmetric solution $u\in C^{2}(B)\cap C^{0}(\overline{B})$.
\end{theorem}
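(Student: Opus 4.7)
The plan is to reduce the problem to the gradient-free $k$-Hessian equation \eqref{equação 1.2} via the Kazdan-Kramer change of variables $v=A_g(u)$ from \eqref{Ag-sec1}. Since $(H_g)$ makes $e^{G}$ strictly positive, $A_g$ is a smooth increasing diffeomorphism of $(-\infty,0]$ onto itself, so it suffices to produce a radial negative solution $v\in C^{2}(B)\cap C^{0}(\overline{B})$ of \eqref{equação 1.2}; then $u=A_g^{-1}(v)$ solves \eqref{equação 1.1}. The first task is to verify that the transformed nonlinearity $h$ in \eqref{h-transformed} satisfies the hypotheses of the radial critical-exponential existence framework developed in \cite{crit e subcritico}: $(\mathcal{H}_f)$ yields the radial continuity and correct sign of $h$; the growth condition \eqref{fe-cri} translates, via Lemma~\ref{H-FG}-$(a)$ together with an elementary change of variables in the definition of $A_g$, into critical exponential growth of $h$ with threshold $\alpha_0$; conditions $(\mathcal{H}_{AR})$ and $(\mathcal{H}_{AR_1})$ yield an Ambrosetti-Rabinowitz condition of type $(k+1)$ for the primitive $H(x,v)=\int_0^{v}h(x,\tau)\,d\tau$; and \eqref{super00} becomes
\begin{equation*}
\limsup_{v\to 0^-}\frac{h(x,v)}{|v|^{k}}<\Lambda_1\quad\text{uniformly in } x\in\overline{B}.
\end{equation*}
These translations are to be packaged as a companion lemma in the spirit of Lemma~\ref{sub-super}.

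Working in the radial admissible space $X_0$, I then consider the functional
\begin{equation*}
J(v)=\frac{c_n}{k+1}\int_0^{1} r^{n-k}|v'|^{k+1}\,dr-\tau\int_0^{1} r^{n-1}H(r,v)\,dr,
\end{equation*}
whose critical points are in one-to-one correspondence with radial weak solutions of \eqref{equação 1.2}. The Tian-Wang Trudinger-Moser inequality \cite{Tian} makes $J$ well defined and $C^{1}$ on $X_0$; the sub-$\Lambda_1$ condition at zero supplies a mountain-pass geometry near the origin, while $(\mathcal{H}_{AR})$ produces a point beyond the mountain where $J$ is negative. A standard deformation argument therefore yields a Palais-Smale sequence at the mountain-pass level $c>0$.

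The central obstacle, inherent to the critical exponential case, is compactness: Palais-Smale sequences for $J$ are precompact only below the threshold
\begin{equation*}
c^{*}:=\frac{1}{k+1}\left(\frac{\alpha_n}{\alpha_0}\right)^{n/2},
\end{equation*}
and ensuring $c<c^{*}$ is exactly where the delicate hypothesis \eqref{fmin-max} is needed. The plan is to test $J$ along the normalized radial Moser family $M_j\in X_0$ attached to the Tian-Wang extremizer problem; a careful expansion of $\max_{t\ge 0}J(tM_j)$, splitting the nonlinear integral into the concentration ball $\{r\le e^{-j/n}\}$ and the transition annulus, shows that the strict inequality $\max_{t\ge 0}J(tM_j)<c^{*}$ holds for $j$ large precisely when the asymptotic lower bound $b_0$ in \eqref{fmin-max} exceeds $e^{-(1+1/2+\cdots+1/k)}(\alpha_n/\alpha_0)^{n/2}(n/\tau)$---which is our hypothesis. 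The sharp constant $e^{-(1+1/2+\cdots+1/k)}$ originates in the boundary-layer correction of the radial $k$-Hessian concentration analysis, and tracking it rigorously is the most delicate step of the proof.

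Once $c<c^{*}$ is secured, a concentration-compactness argument analogous to the one in \cite{DMRuf,crit e subcritico} provides a nontrivial weak limit $v\in X_0$ of the Palais-Smale sequence, which is a radial weak solution of \eqref{equação 1.2}. Elliptic regularity for radial $k$-admissible solutions (see \cite{caffarelli,crit e subcritico}) upgrades $v$ to $C^{2}(B)\cap C^{0}(\overline{B})$. Finally, $u:=A_g^{-1}(v)$ inherits this regularity from the smoothness of $A_g^{-1}$ and, by the content of Section~\ref{section3}, solves \eqref{equação 1.1}, completing the proof.
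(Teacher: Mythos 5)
Your reduction step (use the Kazdan-Kramer diffeomorphism $A_g$ from \eqref{Ag-sec1} to pass to the gradient-free equation \eqref{equação 1.2}, then translate each hypothesis on the pair $f,g$ into the corresponding hypothesis on the transformed $h$) is exactly what the paper does: this is Proposition~\ref{proposição 2.1} combined with Lemmas~\ref{lf1}, \ref{lf2}, and \ref{H-FG}, which verify $(h_0)$, $(h_1)$, $(h_2)$, the critical growth condition, the origin condition, and the asymptotic lower bound \eqref{min-max} for $h$. Where you diverge is that once those hypotheses for $h$ are in place, the paper simply \emph{cites} the existence theorem of \cite{crit e subcritico} (``Case~2'') as a black box and stops, whereas you go on to sketch the internal proof of that cited result: the radial functional $J$ on $X_0$, mountain-pass geometry via \eqref{super00} and $(\mathcal{H}_{AR})$, the Moser-sequence level estimate below a compactness threshold $c^{*}$ driven by \eqref{fmin-max}, concentration-compactness, and radial regularity. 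That extra material is not wrong, but it is not strictly part of the argument for Theorem~\ref{thm3}: once the hypotheses of \cite[Theorem~1.2]{crit e subcritico} are verified, the existence of the radial solution $v$ and its $C^{2}(B)\cap C^{0}(\overline{B})$ regularity follow from that reference, and $u=A_g^{-1}(v)$ inherits this regularity by Proposition~\ref{proposição 2.1}. Two small points: the origin hypothesis in \cite{crit e subcritico} is stated in terms of the primitive, namely $\limsup_{s\to0^{-}}\frac{(k+1)H(x,s)}{|s|^{k+1}}<\Lambda_1$, and the paper's Lemma~\ref{H-FG}-$(b)$ derives this as an \emph{inequality} from \eqref{super00}; your reformulation in terms of $h/|v|^{k}$ is consistent with this but slightly reshuffles the L'Hospital step. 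Also, your exact formula for the compactness threshold $c^{*}$ belongs to the internal proof of the cited theorem and should be checked against \cite{crit e subcritico} if you intend to reproduce it rather than cite it.
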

\section{The change of variable of Kazdan-Kramer type for k-Hessian equation}
\label{section3}
First of all, we recall some notations introduced in \cite{Horn}. Denote by $M_n(\mathbb{R})$ the set of all $n\times n$ matrices over $\mathbb{R}$. Let $A\in M_n(\mathbb{R})$. For index sets $\alpha,\beta\subset\{1,\cdots, n\}$ we denote by 
$A[\alpha,\beta]$ the submatrix of entries that lie in the rows of 
$A$ indexed by $\alpha$ and the columns indexed by $\beta$. If $\alpha=\beta$ the submatrix $A[\alpha, \alpha]$ is a principal submatrix of $A$. An $n\times n$ 
matrix has $\binom{n}{k}$ distinct principal submatrices of size $k$, that is,  for which the cardinality $\vert \alpha\vert=k$.  By convenience, if  $A=[a_1\cdots a_m]\in M_{m}(\mathbb{R})$ is partitioned according to its columns and let $b\in\mathbb{R}^m$, we define
\begin{equation}\nonumber
    (A\xleftarrow{i} b)=[a_1\cdots a_{i-1}\, b \,a_{i+1}\cdots a_m]
\end{equation}
that is, $(A\xleftarrow{i} b)$ denotes the matrix whose $i$-th column is $b$ and whose remaining columns coincide with those of $A$. With the above notation, for $\Omega\subset \mathbb{R}^n,n\ge 2$ bounded smooth domain,  $u \in C^2(\Omega)$ and $1\le k
\le n$, the $k$-th Hessian operator is given by 
\begin{equation}\nonumber
  S_k[u]=\sum_{\vert \alpha\vert=k}\det D^2u[\alpha,\alpha]
\end{equation}
in which the sum is over all $\binom{n}{k}$ index sets $\alpha\subset\{1,\cdots, n\}$ with $\vert\alpha\vert=k$. 

The next result was essentially proved in \cite[Lemma~2.3]{sobre k-hess4} and tells us precisely how the composition functions affects the $k$-Hessian operator and  provide the key to find the gradient type term for the $k$-Hessian equation in \eqref{Hk}. By completeness and to set the notation we include the proof.
	\begin{lemma}\label{lema1}
	Let $u \in C^2(\Omega)$  and let $A:I\to\mathbb{R}$ be a $C^2$-function defined on an interval $I\supset\left\{u(x)\,:\, x\in \Omega\right\}$. Then
	$$S_k[A(u)]=S_k[u][A^\prime(u)]^k + [A^\prime(u)]^{k-1}A^{\prime\prime}(u) H_k(x,u,\nabla u, D^{2}u),\;\; k=1,2, \dots,n$$
with
 \begin{equation}\nonumber
    H_k(x,u,\nabla u, D^{2}u)= \sum_{i=1}^{\binom{n}{k}}\sum_{t=1}^{k} \det(D^{2}u[\alpha_i,\alpha_i]\xleftarrow{t}u_{x_{i_t}}\nabla^{i}_{k}u),\;\;\mbox{with}\;\; \nabla^{i}_{k}u=\left[\begin{array}{c}
         u_{x_{i_1}}  \\
        u_{x_{i_2}}\\
         \vdots\\
         u_{x_{i_k}}
    \end{array}  \right],
 \end{equation} 
 where $\alpha_i=\{i_1,\cdots, i_k\}\subset\{1, \cdots, n\}$ is index set with $\vert \alpha_i\vert=k$.
\end{lemma}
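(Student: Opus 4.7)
My plan is to reduce the identity to a standard multilinearity expansion of the determinants that appear in $S_k[A(u)]$. First I would apply the chain rule twice to obtain the pointwise identity
\begin{equation*}
\partial_{ij}A(u) = A'(u)\, u_{ij} + A''(u)\, u_i u_j,
\end{equation*}
so that for every index set $\alpha=\{i_1,\dots,i_k\}\subset\{1,\dots,n\}$ the principal submatrix of $D^2 A(u)$ splits as
\begin{equation*}
D^2 A(u)[\alpha,\alpha] \;=\; A'(u)\, D^2 u[\alpha,\alpha] \;+\; A''(u)\, \nabla^{\alpha}_k u\,(\nabla^{\alpha}_k u)^{T},
\end{equation*}
where $\nabla^{\alpha}_k u = (u_{x_{i_1}},\dots,u_{x_{i_k}})^T$. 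The second summand is a rank-one matrix.

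Next I would expand $\det D^2 A(u)[\alpha,\alpha]$ by multilinearity of the determinant in its columns. The $t$-th column of $D^2 A(u)[\alpha,\alpha]$ is $A'(u)$ times the $t$-th column of $D^2 u[\alpha,\alpha]$ plus $A''(u)\, u_{x_{i_t}}\,\nabla^{\alpha}_k u$, so the expansion produces $2^k$ terms indexed by the subset $T\subset\{1,\dots,k\}$ of columns taken from the rank-one piece. The key observation is that whenever $|T|\geq 2$, the resulting determinant has at least two columns each proportional to $\nabla^{\alpha}_k u$ and hence vanishes. Only the $k+1$ terms with $|T|\leq 1$ survive: the term $|T|=\emptyset$ contributes $[A'(u)]^k\det D^2 u[\alpha,\alpha]$, while for each $t=1,\dots,k$ the term $T=\{t\}$ contributes
\begin{equation*}
[A'(u)]^{k-1}A''(u)\,\det\!\bigl(D^2 u[\alpha,\alpha]\xleftarrow{t} u_{x_{i_t}}\nabla^{\alpha}_k u\bigr).
\end{equation*}

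Finally I would sum these identities over the $\binom{n}{k}$ index sets $\alpha$ with $|\alpha|=k$. The first group of terms sums to $[A'(u)]^k S_k[u]$, while the remaining terms assemble precisely into $[A'(u)]^{k-1}A''(u)\, H_k(x,u,\nabla u, D^2 u)$ as defined in the statement. This yields the claimed formula. I do not expect a real obstacle here; the only delicate point is bookkeeping the vanishing of the determinants with two rank-one columns and confirming that the surviving "single column replacement" terms match exactly the definition of $H_k$.
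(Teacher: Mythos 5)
Your proposal is correct and follows essentially the same route as the paper: chain rule to split each principal submatrix of $D^2A(u)$ into $A'(u)D^2u[\alpha,\alpha]$ plus the rank-one piece $A''(u)\,\nabla^\alpha_k u\,(\nabla^\alpha_k u)^T$, then column-multilinearity of the determinant together with the observation that any two columns drawn from the rank-one piece are proportional, so only the empty and single-column replacement terms survive. The paper carries out this collapse column by column rather than indexing by subsets $T\subset\{1,\dots,k\}$, but the underlying argument is identical.
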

\begin{proof}
For $i=1,2, \cdots$, $\binom{n}{k}$, let $\alpha_i=\{i_1,\cdots, i_k\}\subset\{1, \cdots, n\}$ index set with $\vert\alpha_i\vert=k$. Hence, we have corresponding submatrix $D^2(A(u))[\alpha_i,\alpha_i]=[d_{i_1}\,d_{i_2}\cdots d_{i_k}]$ with each column $d_{i_t}, 1\le t\le k$  of the form $$d_{i_t}=A^{\prime\prime}(u)d^{1}_{i_t}+A^{\prime}(u)d^2_{i_t},$$ where
\begin{align*}
    d^{1}_{i_t}=\left[\begin{array}{c}
         u_{x_{i_1}}u_{x_{i_t}}  \\
        u_{x_{i_2}}u_{x_{i_t}}\\
         \vdots\\
         u_{x_{i_k}}u_{x_{i_t}}
    \end{array}  \right] \;\;\mbox{and}\;\; d^{2}_{i_t}=\left[\begin{array}{c}
         u_{x_{i_1}x_{i_t}}  \\
        u_{x_{i_2}x_{i_t}}\\
         \vdots\\
         u_{x_{i_k}x_{i_t}}
    \end{array}  \right].
\end{align*}
Since that the function $B\to \det B$ is multilinear in the columns of $B$, we have 
\begin{align*}
    \det D^2(A(u))[\alpha_i,\alpha_i]&=\det [d_{i_1}\;d_{i_2}\;\cdots\; d_{i_k}]\\
   & =A^{\prime\prime}(u)\det [d^1_{i_1}\;d_{i_2}\cdots\; d_{i_k}]+A^{\prime}(u)\det [d^2_{i_1}\;d_{i_2}\cdots\; d_{i_k}]\\
   &=(A^{\prime\prime}(u))^2\det [d^1_{i_1}\;d^1_{i_2}\cdots\; d_k]+A^{\prime\prime}(u)A^{\prime}(u)\det [d^1_{i_1}\;d^2_{i_2}\cdots\; d_{i_k}]\\
   & +A^{\prime}(u)A^{\prime\prime}(u)\det [d^2_{i_1}\;d^{1}_{i_2}\cdots\; d_{i_k}]+(A^{\prime}(u))^2\det [d^2_{i_1}\;d^2_{i_2}\cdots\; d_{i_k}].
\end{align*}
Noticing that the columns $d^{1}_{i_1}$ and $d^{1}_{i_2}$ are proportional we can write 
\begin{align*}
    \det D^2(A(u))[\alpha_i,\alpha_i]&=A^{\prime\prime}(u)A^{\prime}(u)\det [d^1_{i_1}\;d^2_{i_2}\cdots\; d_{i_k}]
    +A^{\prime}(u)A^{\prime\prime}(u)\det [d^2_{i_1}\;d^{1}_{i_2}\cdots\; d_{i_k}]\\
    &+(A^{\prime}(u))^2\det [d^2_{i_1}\;d^2_{i_2}\cdots\; d_{i_k}].
\end{align*}
For $s\not=t$ the columns $d^{1}_{i_s}$ and $d^{1}_{i_t}$ are also proportional, then we can repeat the above argument to conclude that
\begin{equation}\nonumber
\begin{aligned}
     \det D^2(A(u))[\alpha_i,\alpha_i]&=(A^{\prime}(u))^k\det D^2u[\alpha_i,\alpha_i]\\
      & +A^{\prime\prime}(u)(A^{\prime}(u))^{k-1}\sum_{t=1}^{k} \det(D^{2}u[\alpha_i,\alpha_i]\xleftarrow{t}d^{1}_{i_t}).
      \end{aligned}
\end{equation}    
Hence, 
\begin{align*}
    S_k[A(u)]&=\sum_{i=1}^{\binom{n}{k}} \det D^2(A(u))[\alpha_i,\alpha_i]=(A^{\prime}(u))^k \sum_{i=1}^{\binom{n}{k}}\det D^2u[\alpha_i,\alpha_i]\\
    &+A^{\prime\prime}(u)(A^{\prime}(u))^{k-1}\sum_{i=1}^{\binom{n}{k}}\sum_{t=1}^{k} \det(D^{2}u[\alpha_i,\alpha_i]\xleftarrow{t}d^{1}_{i_t})\\
    &=(A^{\prime}(u))^kS_k[u]+A^{\prime\prime}(u)(A^{\prime}(u))^{k-1}\sum_{i=1}^{\binom{n}{k}}\sum_{t=1}^{k} \det(D^{2}u[\alpha_i,\alpha_i]\xleftarrow{t}d^{1}_{i_t}).
\end{align*}
\end{proof}
\noindent Let $g:(-\infty, 0] \to [0, \infty)$ be continuous function. The change of variables of Kazdan-Kramer  type associated to $g$ is the $C^2$-diffeomorphish $A_g:(-\infty, 0]\to (-\infty, 0]$ given by
\begin{equation}\label{AG-change}
    A_g(s)=-\int_{s}^{0}e^{G(t)}dt, \quad\mbox{with}\;\; G(t)=\int_{t}^{0}g(\tau)d\tau. 
\end{equation} 
In view of the result below, we can see that \eqref{AG-change} transforms the $k$-Hessian equation with gradient type term \eqref{equação 1.1} into the $k$-Hessian equation \eqref{equação 1.2} without that term.
\begin{proposition}\label{proposição 2.1}
Let  $H_k$ and $A_g$  be given by \eqref{Hk} and \eqref{AG-change}, respectively. Suppose that $f\in C^{0}(\overline{\Omega\times\mathbb{R}^{-}}, [0,\infty))$. Then,  for each  solution $u\in C^{2}(\Omega)\cap C^{0}(\overline{\Omega})$ of \eqref{equação 1.1}, $v=A_g(u)$ is a solution of class $C^{2}(\Omega)\cap C^{0}(\overline{\Omega})$  of the equation \eqref{equação 1.2}  with 	
\begin{equation}\label{h-transformed2x}
    h(x,s)=e^{kG(A^{-1}_{g}(s))}f(x,A^{-1}_{g}(s)).
\end{equation}
Reciprocally, if $v\in C^{2}(\Omega)\cap C^{0}(\overline{\Omega})$ is a solution  of \eqref{equação 1.2} with $h(x,s)$ of the form \eqref{h-transformed2x}  then $u=A^{-1}_{g}(v)$ is a solution of \eqref{equação 1.1} with $u\in C^{2}(\Omega)\cap C^{0}(\overline{\Omega})$.
\end{proposition}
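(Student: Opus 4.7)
My plan is to exploit Lemma~\ref{lema1} directly: the change of variables $A_g$ is designed precisely so that the derivative term appearing from composition inside $S_k[\cdot]$ cancels the gradient term $g(u)H_k$ on the right-hand side of \eqref{equação 1.1}. First I would compute the derivatives of $A_g$. Since $G'(t)=-g(t)$, differentiating \eqref{AG-change} yields $A_g'(s)=e^{G(s)}>0$ and $A_g''(s)=-g(s)e^{G(s)}$. Consequently $A_g:(-\infty,0]\to(-\infty,0]$ is a strictly increasing $C^2$-diffeomorphism fixing $0$, so $A_g^{-1}$ is a well-defined $C^2$-diffeomorphism of the same interval.

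Next, for the forward implication, I would take a solution $u\in C^{2}(\Omega)\cap C^{0}(\overline{\Omega})$ of \eqref{equação 1.1} and apply Lemma~\ref{lema1} with $A=A_g$ pointwise in $\Omega$, obtaining
\begin{equation*}
S_k[A_g(u)]=e^{kG(u)}S_k[u]-g(u)\,e^{kG(u)}\,H_k(x,u,\nabla u,D^{2}u).
\end{equation*}
Substituting $S_k[u]=g(u)H_k+f(x,u)$ from \eqref{equação 1.1} triggers the built-in cancellation and leaves $S_k[A_g(u)]=e^{kG(u)}f(x,u)$. Writing $v=A_g(u)$ so that $u=A_g^{-1}(v)$, this is exactly $S_k[v]=h(x,v)$ with $h$ as in \eqref{h-transformed2x}. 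The membership $v\in C^{2}(\Omega)\cap C^{0}(\overline{\Omega})$ follows from $A_g\in C^{2}$; the boundary condition $v=0$ on $\partial\Omega$ follows from $A_g(0)=0$; and the sign $v<0$ in $\Omega$ follows from strict monotonicity of $A_g$ together with $u<0$.

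For the converse, I would start with a solution $v\in C^{2}(\Omega)\cap C^{0}(\overline{\Omega})$ of \eqref{equação 1.2}–\eqref{h-transformed2x}, set $u:=A_g^{-1}(v)$, and transfer the regularity, boundary trace, and sign conditions through $A_g^{-1}$. Applying Lemma~\ref{lema1} to $v=A_g(u)$ again and solving the resulting identity for $S_k[u]$ inverts the computation above: the identity $S_k[v]=e^{kG(u)}f(x,u)$ rearranges into $S_k[u]=g(u)H_k(x,u,\nabla u,D^{2}u)+f(x,u)$, which is \eqref{equação 1.1}.

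The only real obstacle here is bookkeeping. The algebraic cancellation is dictated by the definition of $A_g$, so the substantive steps are verifying the formulas for $A_g'$ and $A_g''$, ensuring $A_g^{-1}$ preserves the $C^{2}\cap C^{0}$ regularity, and tracking that the strict inequality $v<0$ is preserved in both directions; none of these requires a genuinely new argument beyond the monotonicity and smoothness of $A_g$.
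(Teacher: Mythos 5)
Your proof is correct and follows the same route as the paper: both apply Lemma~\ref{lema1} to $A_g(u)$, observe that $A_g'=e^{G}$ and $A_g''=-ge^{G}$ force the $H_k$ coefficient to vanish (the paper phrases this as $A_g$ solving the ODE $(y')^{k-1}y''+g(y')^{k}=0$, which is the same cancellation), and then transfer regularity, sign, and boundary data through the diffeomorphism $A_g$ in each direction.
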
 
\begin{proof}
It is easy to verify that $A_g$ is a $C^2$-diffeomorphish such that $A_g(0)=0$, $A_g(-\infty)=-\infty$,  and $A_g<0$, $A^{\prime}_{g}>0$ on $(-\infty, 0)$. In addition, its solves the ordinary differential equation 
\begin{equation}\label{ODE}
(y^{\prime}(s))^{k-1}y^{\prime\prime}(s)+g(s)(y^{\prime}(s))^{k}=0.
\end{equation} 
If $u\in C^{2}(\Omega)\cap C^0(\overline{\Omega})$ solves \eqref{equação 1.1}, we have that $v=A_g(u)$ is such that $v\in C^{2}(\Omega)\cap C^0(\overline{\Omega})$,  $v<0$ in $\Omega$ and $v_{\vert_{\partial\Omega}}=0$. Moreover, since $A_g$ solves \eqref{ODE}, Lemma~\ref{lema1} yields
\begin{equation}\nonumber
    \begin{aligned}
    S_k[A_g(u)]&=\left[[A^{\prime}_{g}(u)]^{k-1}A^{\prime\prime}_{g}(u)+g(u)[A^{\prime}_{g}(u)]^k\right]H_{k}(x,u, \nabla u, D^2 u)+[A^{\prime}_{g}(u)]^kf(x,u)\\
    &=[A^{\prime}_{g}(u)]^kf(x,u).\\
    \end{aligned}
\end{equation}
Noticing that $A^{\prime}_{g}(s)=e^{G(s)}$ and $u=A^{-1}_g(v)$ we obtain that $v$ solves $$S_k[v]=e^{kG(A^{-1}_{g}(v))}f(x,A^{-1}_{g}(v)).$$ Reciprocally, if $v\in C^{2}(\Omega)\cap C^0(\overline{\Omega})$ solves equation \eqref{equação 1.2} with $h$ given by \eqref{h-transformed}, then $u=A^{-1}_g(v)$ is such that $u\in C^{2}(\Omega)\cap C^0(\overline{\Omega})$,  $u<0$ in $\Omega$ and $u_{\vert_{\partial\Omega}}=0$. In addition, we have
\begin{equation}\nonumber
\begin{aligned}
    S_k[A_g(u)]=S_k[v]=h(x, v)& =e^{kG(A^{-1}_g(v))}f(x,A^{-1}_{g}(v))
    =e^{kG(u)}f(x, u)
\end{aligned}
\end{equation}
and, from Lemma~\ref{lema1} 
\begin{equation}\nonumber
\begin{aligned}
    S_k[A_g(u)]=S_k[u][A^\prime_{g}(u)]^k + [A^\prime_{g}(u)]^{k-1}A^{\prime\prime}_{g}(u) H_k(x,u,\nabla u, D^{2}u).
\end{aligned}
\end{equation}
Consequently, by using  $A^{\prime\prime}_{g}(s)=-g(s)e^{G(s)}$ and comparing the last two identities, we have 
\begin{equation}\nonumber
\begin{aligned}
    S_k[u]& =[A^\prime_{g}(u)]^{-k} e^{kG(u)}f(x, u) - [A^\prime_{g}(u)]^{k-1}[A^\prime_{g}(u)]^{-k}A^{\prime\prime}_{g}(u) H_k(x,u,\nabla u, D^{2}u)\\
    &=g(u)H_k(x,u,\nabla u, D^{2}u)+f(x, u).
\end{aligned}
\end{equation}
Hence, $u=A^{-1}_{g}(v)$ solve the equation \eqref{equação 1.1}.
\end{proof}
\section{Existence for the Sobolev case: Proof of Theorem~\ref{thm2-sup} and Theorem~\ref{thm1-sub}}
First, we prove Theorem~\ref{thm2-sup}. The proof relies on the Proposition~\ref{proposição 2.1} combined with the existence result in \cite[Theorem~1.2]{sobre k-hess3}. In fact,  according to \cite{sobre k-hess3} the $k$-Hessian equation
\begin{equation}\label{HessianPsi}
	\begin{cases}
		S_k[u]=\psi(x,u) & \mbox{in}\;\; \Omega\\
		\ u<0 & \mbox{in}\;\;  \Omega \\
		\ u=0 & \mbox{on}\;\; \partial \Omega,
	\end{cases}
\end{equation}
admits $k$-admissible solution  in $C^{3,\alpha}(\Omega)\cap C^{0,1}(\overline{\Omega})$ for some $\alpha\in (0,1)$ if $\Omega$ is of class $C^{3,1}$ and $\psi:\overline{\Omega}\times (-\infty, 0]\to [0, \infty)$ satisfies the assumptions $\psi\in C^{1,1}(\overline{\Omega\times\mathbb{ R^{-}}})$, with $\psi(x,z)>0$ for $z<0$, and the following
\begin{equation}\label{psis0}
\lim_{ z\rightarrow 0^-}\dfrac{\psi(x,z)}{\vert z\vert ^k}<\lambda_{1}\;\; \mbox{uniformly on $\overline{\Omega}$},
\end{equation}
\begin{equation}\label{psi-superlinear}
\lim_{ z\rightarrow -\infty}\dfrac{\psi(x,z)}{\vert z\vert ^k}>\lambda_{1}\;\; \mbox{uniformly on $\overline{\Omega}$},
\end{equation}
\begin{equation}\label{psi-subcritical}
	\lim\limits_{z\rightarrow -\infty}\dfrac{\psi(x,z)}{\vert z\vert ^{k^*-1}}=0, 
	\end{equation}
	and, for some constants $\theta>0$ and large $M$ 
	\begin{equation}\label{psi-AR}
		\int_{z}^{0} \psi(x,s) ds\leq \dfrac{1-\theta}{k+1}\vert z\vert \psi(x,z),
	\end{equation}
for any $z<-M$.

Thus, in view of Proposition~\ref{proposição 2.1}, in order to ensure  Theorem~\ref{thm2-sup}  and since we are assuming  that the transformed function $h$ given by \eqref{h-transformed} is such that $h\in C^{1,1}(\overline{\Omega\times\mathbb{ R^{-}}})$ with $h(x,z)>0$ for $z<0$, 
we only need to verify  that $h$ satisfies the conditions \eqref{psis0}, \eqref{psi-superlinear}, \eqref{psi-subcritical} and \eqref{psi-AR}. Next, the Lemma~\ref{sub-super} ensures that $h$ satisfies  \eqref{psis0} and \eqref{psi-superlinear} and the Lemma~\ref{HAR} shows that $h$ satisfies \eqref{psi-subcritical} and \eqref{psi-AR}.

\begin{lemma}\label{sub-super} Let $h$ be given by \eqref{h-transformed}. Then, we have the identities
\begin{equation}\nonumber
\lim_{ z\rightarrow 0^-}\dfrac{h(x,z)}{\vert z\vert ^k}=\lim_{z\rightarrow 0^-}\dfrac{f(x,z)}{\vert z\vert ^k}
\end{equation}
and 
\begin{equation}\nonumber
  \lim_{ z\rightarrow -\infty}\dfrac{h(x,z)}{\vert z\vert ^k}=\lim_{t\rightarrow -\infty}\dfrac{e^{kG(t)}f(x,t)}{\left(\int_{t}^{0}e^{G(s)}ds\right)^{k}}.
\end{equation}
In particular,  if the pair $f, g$ is such that \eqref{s0} and \eqref{f-superlinear} hold, then $h$  satisfies \eqref{psis0} and \eqref{psi-superlinear}.
\end{lemma}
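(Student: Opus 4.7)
The plan is to reduce both limits to the substitution $t = A_g^{-1}(z)$, exploit the fact that $A_g:(-\infty,0]\to(-\infty,0]$ is a $C^2$-diffeomorphism sending $0\mapsto 0$ and $-\infty\mapsto -\infty$ (as noted in the proof of Proposition~\ref{proposição 2.1}), and track what $|z|$ becomes under this change. By definition of $h$ in \eqref{h-transformed} and of $A_g$ in \eqref{AG-change}, for any $z<0$ if we set $t=A_g^{-1}(z)$ then
\begin{equation*}
|z|=-A_g(t)=\int_t^0 e^{G(s)}\,ds\quad\text{and}\quad h(x,z)=e^{kG(t)}f(x,t),
\end{equation*}
so
\begin{equation*}
\frac{h(x,z)}{|z|^k}=\frac{e^{kG(t)}f(x,t)}{\left(\int_t^0 e^{G(s)}\,ds\right)^k}.
\end{equation*}

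For the second identity, as $z\to -\infty$ one has $t=A_g^{-1}(z)\to -\infty$, and the displayed equality above is exactly the claim. For the first identity, as $z\to 0^-$ one has $t\to 0^-$, and I would note that because $G$ is continuous on $(-\infty,0]$ with $G(0)=0$, the integrand $e^{G(s)}\to 1$ as $s\to 0^-$; hence
\begin{equation*}
\frac{1}{|t|}\int_t^0 e^{G(s)}\,ds \longrightarrow 1\quad\text{as } t\to 0^-,
\end{equation*}
and also $e^{kG(t)}\to 1$. Combining these with the displayed equality yields
\begin{equation*}
\lim_{z\to 0^-}\frac{h(x,z)}{|z|^k}=\lim_{t\to 0^-}\frac{e^{kG(t)}f(x,t)}{\left(\int_t^0 e^{G(s)}\,ds\right)^k}=\lim_{t\to 0^-}\frac{f(x,t)}{|t|^k},
\end{equation*}
as required.

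The ``in particular'' part is then immediate: the hypotheses \eqref{s0} and \eqref{f-superlinear} are exactly that the two right-hand sides above are bounded by $\lambda_1$ from above and below, respectively, uniformly in $x\in\overline{\Omega}$, and the identities transport these bounds to $h$, giving \eqref{psis0} and \eqref{psi-superlinear}. The only mild subtlety is uniformity in $x$, but since the substitution $t=A_g^{-1}(z)$ does not depend on $x$, uniform convergence in $x$ is preserved under the change of variable. I do not anticipate a real obstacle; the argument is a direct change-of-variables computation.
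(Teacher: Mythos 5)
Your proof is correct and takes essentially the same route as the paper: the substitution $t=A_g^{-1}(z)$, the identity $h(x,z)/|z|^k = e^{kG(t)}f(x,t)/\bigl(\int_t^0 e^{G(s)}ds\bigr)^k$, and then showing the correction factor tends to $1$ as $t\to 0^-$. The only cosmetic difference is that the paper evaluates $\lim_{t\to 0^-}tA_g'(t)/A_g(t)=1$ via L'H\^opital (arriving at $\lim_{t\to 0^-}[1-tg(t)]=1$), whereas you argue directly that $e^{G(t)}\to 1$ and $\tfrac{1}{|t|}\int_t^0 e^{G(s)}\,ds\to 1$ by continuity; both rest on the same fact that $g$ is continuous up to $0$.
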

\begin{proof} By the L'Hospital rule, we have
\begin{equation}\label{H-id}
\begin{aligned}
   \lim_{t\rightarrow 0^-}\dfrac{tA^{\prime}_{g}(t)}{A_{g}(t)} & =\lim_{ t\rightarrow 0^-}\dfrac{te^{G(t)}}{-\int_{t}^{0}e^{G(s)} ds}\\
   &= \lim_{ t\rightarrow 0^-}\dfrac{e^{G(t)}-tg(t)e^{G(t)}}{e^{G(t)}}\\
   &= \lim_{ t\rightarrow 0^-}[1-tg(t)]\\
   &=1.
\end{aligned}
\end{equation}
By setting $t=A^{-1}_{g}(z)$ we have $t\rightarrow 0^-$ if $z\to 0^{-}$. Thus, from \eqref{H-id}
\begin{equation}\nonumber
    \begin{aligned}
       \lim_{ z\rightarrow 0^-}\dfrac{h(x,z)}{\vert z\vert^k}&=\lim_{z\rightarrow 0^-}\frac{e^{kG(A^{-1}_{g}(z))}f(x,A^{-1}_{g}(z))}{\vert z\vert^{k}}\\
       &= \lim_{t\rightarrow 0^-}\frac{e^{kG(t)}f(x,t)}{\vert A_g(t)\vert^{k}}\\
       &= \lim_{t\rightarrow 0^-}\frac{[A^{\prime}_{g}(t)]^{k}f(x,t)}{\vert A_g(t)\vert^{k}}\\
       &= \lim_{t\rightarrow 0^-} \Big[\frac{-t A^{\prime}_g(t)}{-A_g(t)}\Big]^{k}\frac{f(x,t)}{\vert t\vert^{k}}\\
       &= \lim_{t\rightarrow 0^-}\frac{f(x,t)}{\vert t\vert ^{k}}.
    \end{aligned}
\end{equation}
 Analogously, by setting $t=A^{-1}_{g}(z)$ we have $t\rightarrow -\infty $ if $z\to -\infty$. Thus, 
	\begin{equation}\nonumber
    \begin{aligned}
       \lim_{ z\rightarrow -\infty}\dfrac{h(x,z)}{\vert z\vert ^k}&=\lim_{z\rightarrow -\infty}\frac{e^{kG(A^{-1}_{g}(z))}f(x,A^{-1}_{g}(z))}{\vert z\vert ^{k}}\\
       &=\lim_{t\rightarrow -\infty}\dfrac{e^{kG(t)}f(x,t)}{\left(\int_{t}^{0}e^{G(s)}ds\right)^{k}}.
    \end{aligned}
\end{equation}
\end{proof}
\begin{remark} \label{remark2} Note that  
\begin{equation}\nonumber
 \lim_{z\rightarrow -\infty}\dfrac{e^{kG(z)}f(x,z)}{\left(\int_{z}^{0}e^{G(s)}ds\right)^{k}}\ge  \lim_{z\rightarrow -\infty}\dfrac{f(x,z)}{\vert z\vert ^k}.
\end{equation}
In fact, as well as in \eqref{H-id} we have $  \lim_{z\rightarrow -\infty}\dfrac{zA^{\prime}_{g}(z)}{A_{g}(z)} 
   =\lim_{ z\rightarrow -\infty}[1-zg(z)]
   \ge 1.$
Thus, 
	\begin{equation}\nonumber
    \begin{aligned}
       \lim_{z\rightarrow -\infty}\dfrac{e^{kG(z)}f(x,z)}{\left(\int_{z}^{0}e^{G(s)}ds\right)^{k}}
       &= \lim_{z\rightarrow -\infty} \Big[\frac{z A^{\prime}_g(z)}{A_g(z)}\Big]^{k}\frac{f(x,z)}{\vert z\vert ^{k}}
       \ge  \lim_{z\rightarrow -\infty}\frac{f(x,z)}{\vert z\vert ^{k}}.
    \end{aligned}
\end{equation}
Consequently, if $f(x,z)$ is superlinear then the pair $f(x,z),g(z)$ is superlinear in the sense of \eqref{f-superlinear}.
\end{remark}
\begin{lemma}\label{HAR} Assume that $f, g$ satisfy  the assumptions $(H_g)$, $(H_f)$, $(H_{SC})$ and $(H_{AR})$. Then, $h$ in \eqref{h-transformed} satisfies  \eqref{psi-subcritical} and \eqref{psi-AR}.
\end{lemma}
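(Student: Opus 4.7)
The plan is to handle both conclusions by the same change of variable $t=A_g^{-1}(z)$ that was used in Lemma~\ref{sub-super}. Since $A_g:(-\infty,0]\to(-\infty,0]$ is a $C^2$-diffeomorphism with $A_g(-\infty)=-\infty$, sending $z\to -\infty$ corresponds to sending $t\to -\infty$, and one always has $|z|=|A_g(t)|=\int_{t}^{0}e^{G(s)}ds$ together with $A_g^\prime(t)=e^{G(t)}$. These identities are the only ingredients needed; neither condition requires a delicate estimate.

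For \eqref{psi-subcritical}, I would simply rewrite
\[
\frac{h(x,z)}{|z|^{k^*-1}}=\frac{e^{kG(A_g^{-1}(z))}f(x,A_g^{-1}(z))}{|A_g(A_g^{-1}(z))|^{k^*-1}}=\frac{e^{kG(t)}f(x,t)}{\bigl(\int_{t}^{0}e^{G(s)}ds\bigr)^{k^*-1}},
\]
which by hypothesis $(H_{SC})$ tends to $0$ uniformly in $x\in\overline{\Omega}$ as $t\to-\infty$.

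For \eqref{psi-AR}, the main step is to compute $\int_{z}^{0}h(x,s)\,ds$ by the substitution $s=A_g(r)$, so that $ds=A_g^\prime(r)\,dr=e^{G(r)}dr$ and the interval $[z,0]$ in $s$ becomes $[A_g^{-1}(z),0]=[t,0]$ in $r$. This will produce
\[
\int_{z}^{0}h(x,s)\,ds=\int_{t}^{0}e^{(k+1)G(r)}f(x,r)\,dr,
\]
which is exactly the left-hand side in $(H_{AR})$. Applying $(H_{AR})$ at the point $t$ and then recognizing $e^{kG(t)}f(x,t)=h(x,z)$ and $\int_{t}^{0}e^{G(s)}ds=|z|$ gives the desired inequality $\int_{z}^{0}h(x,s)\,ds\le \frac{1-\theta}{k+1}|z|h(x,z)$ for $|z|$ large enough.

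There is no real obstacle here: the Ambrosetti-Rabinowitz-type condition $(H_{AR})$ and the subcriticality condition $(H_{SC})$ were precisely designed so that, after the Kazdan-Kramer substitution, they become the analogous conditions \eqref{psi-subcritical} and \eqref{psi-AR} needed to invoke \cite[Theorem~1.2]{sobre k-hess3}. The only care required is that the substitution in the integral must transform the measure $ds$ into $e^{G(r)}dr$, which is what upgrades the exponent $k$ in $e^{kG}$ to $k+1$ and matches the form of $(H_{AR})$ on the nose.
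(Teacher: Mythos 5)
Your proposal is correct and follows essentially the same path as the paper: the identical substitution $t=A_g^{-1}(z)$ (equivalently $s=A_g(r)$, $ds=e^{G(r)}\,dr$) to convert $(H_{SC})$ into \eqref{psi-subcritical} and $(H_{AR})$ into \eqref{psi-AR}, using $A_g'(t)=e^{G(t)}$ and $|z|=\int_t^0 e^{G(s)}\,ds$ to match the pieces. The paper also notes the threshold becomes $z<A_g(-M)$, which you gloss as ``for $|z|$ large enough,'' but this is a cosmetic difference.
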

\begin{proof} If $t=A^{-1}_{g}(z)$ we have $t\rightarrow -\infty $ if $z\to -\infty$. Thus, for any $r>0$
\begin{equation}\nonumber
\begin{aligned}
\lim_{z\rightarrow -\infty}\dfrac{h(x,z)}{\vert z\vert^{r}}& =\lim_{z\rightarrow -\infty}\dfrac{e^{kG(A^{-1}_{g}(z))}f(x,A^{-1}_{g}(z))}{\vert z\vert^{r}}\\
&= \lim_{t\rightarrow -\infty}\dfrac{e^{kG(t)}f(x,t)}{\vert A_{g}(t)\vert^{r}}\\
&= \lim_{t\rightarrow -\infty}\dfrac{e^{kG(t)}f(x,t)}{\big(\int_{t}^{0}e^{G(s)}ds\big)^{r}}.
\end{aligned}
\end{equation}
This yields \eqref{psi-subcritical} provided that $f,g$ satisfies $(H_{SC})$. In addition,  assume that $f$ and $g$ satisfy $(H_{AR})$ for some $\theta>0$ and $M>0$. Thus, using the change of variable $t=A^{-1}_{g}(s)$ we get
\begin{equation}\nonumber
\begin{aligned}
    \int_{z}^{0}h(x,s)ds &=\int_{z}^{0}e^{kG(A^{-1}_{g}(s))}f(x,A^{-1}_{g}(s))ds\\
    &=\int_{A^{-1}_{g}(z)}^{0}e^{(k+1)G(t)}f(x,t)dt\\
    &\le  \frac{1-\theta}{k+1}e^{kG(A^{-1}_{g}(z))}f(x,A^{-1}_{g}(z))\int_{A^{-1}_{g}(z)}^{0}e^{G(s)}ds\\
     &=\frac{1-\theta}{k+1}h(x,z)\vert z\vert
\end{aligned}
\end{equation}
for any $z<A_g(-M)$. This proves \eqref{psi-AR}.
\end{proof} 

Analogous to the proof of Theorem~\ref{thm2-sup},  to prove Theorem~\ref{thm1-sub} we combine Proposition~\ref{proposição 2.1}  with the existence result in \cite[Theorem~1.3]{sobre k-hess3}. Indeed, by \cite[Theorem~1.3]{sobre k-hess3} the equation \eqref{HessianPsi} admits solution $u\in C^{3,\alpha}(\Omega)\cap C^{0}(\overline{\Omega})$ for some $\alpha\in (0,1)$ provided that  $\Omega$ is of class $C^{3,1}$, $\psi\in C^{1,1}(\overline{\Omega}\times\mathbb{ R^{-}})\cap C^{0}(\overline{\Omega\times\mathbb{ R^{-}}})$, with $\psi(x,z)>0$ for $z<0$
and satisfies 
\begin{equation}\label{psis02}
\lim_{ z\rightarrow 0^-}\dfrac{\psi(x,z)}{\vert z\vert ^k}>\lambda_{1}\;\; \mbox{uniformly on $\overline{\Omega}$},
\end{equation}
and
\begin{equation}\label{psi-sublinear}
\lim_{ z\rightarrow -\infty}\dfrac{\psi(x,z)}{\vert z\vert^k}<\lambda_{1}\;\; \mbox{uniformly on $\overline{\Omega}$}.
\end{equation}
We are assuming $h\in C^{1,1}(\overline{\Omega}\times\mathbb{R^{-}})\cap C^{0}(\overline{\Omega\times\mathbb{ R^{-}}})$ with $h(x,z)>0$ for $z<0$. Hence,  from Proposition~\ref{proposição 2.1} and \cite[Theorem~1.3]{sobre k-hess3} it is sufficient to show that $h$ given by \eqref{h-transformed} satisfies \eqref{psis02} and \eqref{psi-sublinear}. But, it is a easy consequence of \eqref{f-sublinear}, \eqref{s011} and Lemma~\ref{sub-super}. 

\section{Non-existence results: Proof of Theorem~\ref{non-ex}}
 In this section, we establish non-existence results for the $k$-Hessian equation in \eqref{equação 1.1}. First, let us consider the  equation
 \begin{equation}
	\begin{cases}\label{Gen-E}
		S_k[u]=\psi(x,u) & \mbox{in}\;\; \Omega\\
		\ u<0 & \mbox{in}\;\;  \Omega \\
		\ u=0 & \mbox{on}\;\; \partial \Omega,
	\end{cases}
\end{equation}
where $\psi:\overline{\Omega}\times (-\infty, 0]\to \mathbb{R}$ is a continuous  function. Based on the Pucci-Serrin general identity \cite{Pucci-Serrin}, the following result is established by Tso in \cite[Proposition~1]{tso}.
 
 \begin{proposition}\label{PropTSO}
Let $\Omega$ be a bounded $C^2$-domain which is star-shaped with respect to the origin.  Suppose that $\psi$ belongs to $C^{0}(\overline{\Omega\times\mathbb{R}^{-}})\cap C^1(\Omega \times \mathbb{R}^{-})$, positive in $\Omega \times\mathbb{R}^{-}$ and $\psi\equiv 0$ on $\Omega \times \{0\}$. Then, there are no  solutions to \eqref{Gen-E} which belong to $C^1(\overline{\Omega})\cap C^4(\Omega)$ if
 	$$
 	n\Psi(x,z)-\dfrac{n-2k}{k+1}z\psi(x,z)+x_i\Psi_{x_i}(x,z)>0, \;\; \mbox{on}\;\; \Omega \times (-\infty,0)
 	$$
 where $\Psi(x,z)=\int_{0}^{z}\psi(x,t)dt$. In addition, if $\langle x,\nu \rangle>0$ on $\partial \Omega$,  the same conclusion holds under
 	$$
 	n\Psi(x,z)-\dfrac{n-2k}{k+1}z\psi(x,z)+x_i\Psi_{x_i}(x,z)\geq0.
 	$$
 \end{proposition}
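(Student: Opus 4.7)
My plan is to prove Proposition~\ref{PropTSO} by deriving a Pohozaev--Rellich type identity for the $k$-Hessian operator and then extracting a contradiction with the sign hypothesis on $\psi$.

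\emph{Step 1 (Pohozaev identity for $S_k$).} Starting from the divergence structure $S_k[u]=\frac{1}{k}\partial_i(u_j S_k^{ij}[u])$, I would apply the Pucci--Serrin general variational identity with multiplier $h^i(x)=x^i$ and weight $a=\frac{n-2k}{k+1}$, exploiting the twin identities $u_{ij}S_k^{ij}[u]=kS_k[u]$ (Euler homogeneity) and $\partial_i S_k^{ij}[u]=0$ (divergence-free property of the cofactor-like matrix). A careful integration by parts then yields, for any $u\in C^4(\Omega)\cap C^1(\overline\Omega)$ with $u=0$ on $\partial\Omega$,
\begin{equation}\label{pohozaev-plan}
\frac{n-2k}{k+1}\int_\Omega (-u)S_k[u]\,dx-\int_\Omega (x\cdot\nabla u)S_k[u]\,dx=-\frac{1}{k+1}\int_{\partial\Omega}\Bigl(\frac{\partial u}{\partial\nu}\Bigr)^{k+1}P_{k-1}(x)\langle x,\nu\rangle\,dS,
\end{equation}
where $P_{k-1}(x)$ is the $(k-1)$-th elementary symmetric function of the principal curvatures of $\partial\Omega$ at $x$. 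The boundary integrand arises because $u\equiv 0$ on $\partial\Omega$ forces $\nabla u=\frac{\partial u}{\partial\nu}\nu$, while the tangential components of $D^2u$ on $\partial\Omega$ coincide with $-\frac{\partial u}{\partial\nu}$ times the second fundamental form; the non-negativity $P_{k-1}(x)\ge 0$ is encoded in the $k$-admissibility of $u$ up to $\partial\Omega$.

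\emph{Step 2 (Conversion to $\psi,\Psi$).} Substituting $S_k[u]=\psi(x,u)$ and using the chain-rule identity $\psi(x,u)u_i=\partial_i\Psi(x,u)-\Psi_{x_i}(x,u)$, a single integration by parts whose boundary term vanishes (since $\Psi(x,0)\equiv 0$) gives
$$\int_\Omega (x\cdot\nabla u)\psi(x,u)\,dx=-n\int_\Omega \Psi(x,u)\,dx-\int_\Omega x\cdot\Psi_x(x,u)\,dx.$$
Plugging this into \eqref{pohozaev-plan} rewrites the identity as
\begin{equation}\label{pohozaev-final-plan}
\int_\Omega\Bigl[\,n\Psi(x,u)-\tfrac{n-2k}{k+1}u\,\psi(x,u)+x\cdot\Psi_x(x,u)\,\Bigr]dx=-\frac{1}{k+1}\int_{\partial\Omega}\Bigl(\frac{\partial u}{\partial\nu}\Bigr)^{k+1}P_{k-1}(x)\langle x,\nu\rangle\,dS.
\end{equation}

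\emph{Step 3 (Contradiction).} Suppose a solution $u\in C^1(\overline\Omega)\cap C^4(\Omega)$ of \eqref{Gen-E} exists. Because $u<0$ in $\Omega$ and $u=0$ on $\partial\Omega$, a Hopf-type argument for $k$-admissible solutions gives $\frac{\partial u}{\partial\nu}\ge 0$ on $\partial\Omega$; combined with $P_{k-1}(x)\ge 0$ and the star-shapedness $\langle x,\nu\rangle\ge 0$, the right-hand side of \eqref{pohozaev-final-plan} is $\le 0$. Under the strict hypothesis $n\Psi-\tfrac{n-2k}{k+1}z\psi+x\cdot\Psi_x>0$ on $\Omega\times(-\infty,0)$, the left-hand side of \eqref{pohozaev-final-plan} is strictly positive (since $u<0$ throughout $\Omega$), producing a contradiction. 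Under the weakened hypothesis (only $\ge 0$), the additional assumption $\langle x,\nu\rangle>0$ on $\partial\Omega$ makes the right-hand side strictly negative (using the strict Hopf positivity of $\frac{\partial u}{\partial\nu}$ and the $(k-1)$-convex structure of $\partial\Omega$), whereas the left-hand side remains $\ge 0$, again a contradiction.

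\emph{Main obstacle.} The technical core of the argument is Step~1: obtaining the precise bulk coefficient $\tfrac{n-2k}{k+1}$ and identifying the boundary density as the $(k-1)$-th elementary symmetric function of the principal curvatures of $\partial\Omega$. This demands a careful manipulation of the cofactor-like tensor $S_k^{ij}$ and a rewriting of $D^2u$ on $\partial\Omega$ in the normal/tangential frame. Once \eqref{pohozaev-plan} is in hand, the conversion in Step~2 and the sign chasing in Step~3 are routine.
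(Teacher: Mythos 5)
The paper does not actually prove Proposition~\ref{PropTSO}: it is quoted verbatim as Proposition~1 of Tso \cite{tso}, with the sole remark that it rests on the Pucci--Serrin general variational identity \cite{Pucci-Serrin}. So there is no in-paper argument to compare against, and what you have written is a reconstruction of Tso's proof.

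Your outline has the right skeleton for that reconstruction: the multiplier $h^i(x)=x^i$ together with the weight $a=\tfrac{n-2k}{k+1}$ in the Pucci--Serrin identity, the algebraic facts $u_{ij}S_k^{ij}[u]=kS_k[u]$ and $\partial_iS_k^{ij}[u]=0$, the chain-rule split $\psi(x,u)u_i=\partial_i\Psi(x,u)-\Psi_{x_i}(x,u)$ with the boundary contribution killed by $\Psi(x,0)=0$, and the sign bookkeeping at the end. Two spots should be flagged, however, because they are precisely where the real content of Tso's proof lives and your sketch currently waves at them. First, you justify $P_{k-1}(x)\ge 0$ by ``$k$-admissibility of $u$ up to $\partial\Omega$,'' but the hypothesis only gives $u\in C^1(\overline\Omega)\cap C^4(\Omega)$, so $D^2u$ has no boundary values; establishing that the $(k-1)$-th elementary symmetric curvature of $\partial\Omega$ is nonnegative where $\partial u/\partial\nu>0$ is a separate boundary lemma requiring an interior approximation, not a direct evaluation of admissibility at $\partial\Omega$. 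Second, in the weakened case ($\ge 0$ in the bulk, $\langle x,\nu\rangle>0$ on $\partial\Omega$) you assert the boundary term is strictly negative via a Hopf lemma, but a strict Hopf bound $\partial u/\partial\nu>0$ is not off-the-shelf for $k$-admissible functions and, even granting it, the boundary integrand still vanishes wherever $P_{k-1}=0$, so you have not ruled out the degenerate equality case; an additional argument is needed to exclude it. These are exactly the auxiliary lemmas in \cite{tso} that your ``main obstacle'' paragraph concedes are the hard part but does not actually supply.
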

Combining Proposition~\ref{proposição 2.1} and Proposition~\ref{PropTSO}, we get the following non-existence result which  contains Theorem~\ref{non-ex}.
\begin{lemma}\label{lema-nonex}
Assume $\Omega$ under the conditions of Proposition~\ref{PropTSO}.  Suppose that $g:(-\infty, 0]\to [0, \infty)$ is a continuous function and $f\in C^{0}(\overline{\Omega\times\mathbb{R}^{-}})\cap C^1(\Omega \times \mathbb{R}^{-})$, positive in $\Omega \times\mathbb{R}^{-}$ and $f\equiv 0$ on $\Omega \times \{0\}$. Let 
\begin{equation}\nonumber
    h(x,s)=e^{kG(A^{-1}_{g}(s))}f(x,A^{-1}_{g}(s)),
\end{equation}
where $A_g$ is  given by \eqref{AG-change}. Then, \eqref{equação 1.1} has no negative solution in $C^1(\overline{\Omega})\cap C^4(\Omega)$ when 
		\begin{equation}
		nH(x,z)-\dfrac{n-2k}{k+1}zh(x,z)+x_iH_{x_i}(x,z)>0, \;\; \mbox{on}\;\; \Omega \times (-\infty,0)
		\end{equation}
	 with $H(x,z)=\int_{0}^{z}h(x,s)ds$. The same conclusion holds under
		\small
		\begin{equation}
		nH(x,z)-\dfrac{n-2k}{k+1}zh(x,z)+x_iH_{x_i}(x,z)\ge 0 
		\end{equation}
		if $\langle x,\nu\rangle>0$ on $\partial \Omega$.
\end{lemma}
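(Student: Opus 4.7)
The strategy is to argue by contradiction, reducing the problem to Tso's non-existence statement (Proposition~\ref{PropTSO}) via the Kazdan--Kramer change of variables of Proposition~\ref{proposição 2.1}. Suppose, toward a contradiction, that there exists a negative solution $u\in C^1(\overline{\Omega})\cap C^4(\Omega)$ of \eqref{equação 1.1}. By Proposition~\ref{proposição 2.1} the function $v=A_g(u)$ solves the gradient-free equation \eqref{equação 1.2} with right-hand side $h(x,s)=e^{kG(A_g^{-1}(s))}f(x,A_g^{-1}(s))$, is negative in $\Omega$, and satisfies $v|_{\partial\Omega}=0$. Since $A_g$ is a $C^2$-diffeomorphism of $(-\infty,0]$ and $u$ lies in $C^1(\overline{\Omega})\cap C^4(\Omega)$, the composite $v=A_g(u)$ inherits the same regularity class (the interior $C^4$-regularity can be recovered via elliptic regularity for the $k$-Hessian equation satisfied by $v$, using the smoothness of $h$ inherited from $f$ and $G$).

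Next I verify the hypotheses required by Proposition~\ref{PropTSO} applied with $\psi=h$. The positivity $h>0$ on $\Omega\times\mathbb{R}^{-}$ is immediate from $f>0$ there together with $e^{kG(A_g^{-1}(s))}>0$. The vanishing condition at $z=0$ holds because $A_g^{-1}(0)=0$, $G(0)=0$, and $f\equiv 0$ on $\Omega\times\{0\}$, whence $h(x,0)=e^{0}f(x,0)=0$. Finally, $h\in C^{0}(\overline{\Omega\times\mathbb{R}^{-}})\cap C^{1}(\Omega\times\mathbb{R}^{-})$ follows from composing $f$ and $G$ with the $C^{2}$-diffeomorphism $A_g^{-1}$ and using the stated regularity of $f$.

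With these hypotheses in place, and writing $\Psi=H$ for $H(x,z)=\int_{0}^{z}h(x,s)\,ds$, the two sign conditions of Proposition~\ref{PropTSO} read \emph{verbatim} as the standing assumptions of the present lemma: $nH(x,z)-\tfrac{n-2k}{k+1}zh(x,z)+x_{i}H_{x_{i}}(x,z)>0$ on $\Omega\times(-\infty,0)$ in the general case, and the same inequality with ``$\ge 0$'' when $\langle x,\nu\rangle>0$ on $\partial\Omega$. Proposition~\ref{PropTSO} therefore rules out the existence of $v\in C^{1}(\overline{\Omega})\cap C^{4}(\Omega)$ solving \eqref{equação 1.2}, contradicting the reduction from $u$. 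The only subtle point is the transfer of $C^{4}$-regularity from $u$ to $v$ when $g$ is merely continuous; in the intended application to Theorem~\ref{non-ex} one has $g\equiv 1$, so $A_g$ is real-analytic and no loss of regularity occurs, while in general one invokes interior elliptic regularity for the transformed $k$-Hessian equation to close the argument.
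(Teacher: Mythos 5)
Your approach matches the paper's exactly: convert to the gradient-free equation \eqref{equação 1.2} via Proposition~\ref{proposição 2.1}, verify that $h$ satisfies the hypotheses of Proposition~\ref{PropTSO}, and conclude non-existence of the transformed solution hence of the original one. You also (commendably) flag a regularity subtlety that the paper's proof glosses over --- transferring $C^{4}(\Omega)$-regularity from $u$ to $v=A_{g}(u)$ when $A_{g}$ is merely $C^{2}$ --- and you correctly observe it is harmless in the one case actually used ($g\equiv1$ in Theorem~\ref{non-ex}), though your fallback to ``interior elliptic regularity'' for general continuous $g$ is not airtight, since then $h$ is only $C^{1}$ in the $s$-variable, which does not by itself give a Schauder-type upgrade of $v$ to $C^{4}$.
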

\begin{proof}
The assumptions assumed on $g$ and $f$ ensure that $h$ satisfies all hypotheses of the Proposition~\ref{PropTSO}. Hence, we obtain the non-existence for the equation \eqref{equação 1.2}, and consequently from Proposition~\ref{proposição 2.1} we get the non-existence for the $k$-Hessian equation in \eqref{equação 1.1}.
\end{proof}

\paragraph{Proof of Theorem~\ref{non-ex}.} For $f(x,z)=(e^{-z}-1)^pe^{kz}$ and $g\equiv1$, we have $h(x,s)=(-s)^{p}$.
Thus, for $p+1>k^{*}=n(k+1)/(n-2k)$ we obtain
\begin{equation}\nonumber
\begin{aligned}
   	nH(x,z)-\dfrac{n-2k}{k+1}zh(x,z)+x_iH_{x_i}(x,z)
   	&=\Big[-\frac{n}{p+1}+\dfrac{n-2k}{k+1}\Big](-z)^{p+1}>0.
\end{aligned}
\end{equation}
If $p+1=k^{*}=n(k+1)/(n-2k)$, we get $	nH(x,z)-\dfrac{n-2k}{k+1}zh(x,z)+x_iH_{x_i}(x,z)=0$ and the non-existence still holds provided that the boundary assumption $\langle x,\nu\rangle>0$ on $\partial \Omega$ is assumed. Thus, Theorem~\ref{non-ex} follows from Lemma~\ref{lema-nonex}.

\section{Existence for the Trudinger-Moser case}
This section is devoted to prove the existence results for subcritical growth Theorem~\ref{thm2} and critical growth Theorem~\ref{thm3}. In order to get our aim we combine Proposition~\ref{proposição 2.1} with the existence results in \cite[Theorem~1.1 and Theorem~1.2]{crit e subcritico}. In \cite{crit e subcritico} the authors investigate existence of  radially symmetric $k$-admissible solution for $k$-Hessian equation 
\begin{equation}\label{E-}
	\begin{cases}
		S_k[u]=\varphi(x,-u) & \mbox{in}\;\; B\\
		\ u<0 & \mbox{in}\;\;  B \\
		\ u=0 & \mbox{on}\;\; \partial B,
	\end{cases}
\end{equation}
when $k=n/2$, $B\subset \mathbb{R}^n$ is the unit ball centered at origin and the function $\varphi:\overline{B}\times\mathbb{R}\to\mathbb{R}$ has subcritical exponential growth,i.e,   
\begin{equation}\label{varphi-e-sub}
    \lim_{s\to\infty}\vert \varphi(x,s)\vert e^{-\alpha\vert s\vert^{\frac{n+2}{n}}}=0,\;\;\mbox{uniformly for}\;\;x\in \overline{B},\;\;\mbox{for all}\;\; \alpha>0
\end{equation}
or critical exponential growth, i.e, there is $\alpha_0>0$ such that
\begin{equation}\label{e-cri}
\lim_{s\rightarrow\infty}\vert \varphi(x,s)\vert e^{-\alpha \vert s\vert^{\frac{n+2}{n}}}=\left\{
\begin{aligned}
           &0,   \;\;&\mbox{for all}&\;\;  \alpha>\alpha_0\\
		&\infty, \;\;&\mbox{for all}&\;\;  \alpha<\alpha_0  \\
\end{aligned}
\right.,\;\;\mbox{uniformly for}\;\;x\in \overline{B}.
\end{equation}
The following hypotheses on $\varphi$ were considered in \cite{crit e subcritico}.
\begin{enumerate}
    \item [$(\varphi_0)$] $\varphi$  is continuous and $\varphi\ge 0$ on $\overline{B}\times \mathbb{R}$, $\varphi(\cdot, s)$ is a radially symmetric function and $\varphi(x,s)=0$ for $(x,s)\in\overline{B}\times(-\infty, 0]$.
    \item [$(\varphi_1)$] There exist $\vartheta>k+1$, $0<r_1<r_2<1$ and $s_0>0$ such that for $s>s_0$
\begin{equation}\nonumber
\begin{aligned}
        \vartheta\Phi(x,s)\le s\varphi(x,s)\;\mbox{if}\;x\in\overline{B}\;\mbox{and}\;\Phi(x,s)>0 \;\mbox{if}\; x\in B_{r_2}\setminus B_{r_1},
        \end{aligned}
    \end{equation}
    where $\Phi(x,s)=\int_{0}^{s}\varphi(x,\tau)d\tau$.
     \item [$(\varphi_2)$] There exist $L>0$ and $M>0$ such that $0<\Phi(x,s)\le M \varphi(x,s)$, for $s>L$ and $x\in\overline{B}$.
    \end{enumerate}
With this notation,  the existence of radially symmetric $k$-admissible solution for \eqref{E-} is ensured in \cite{crit e subcritico} for the following conditions: 
\paragraph{\textbf{Case~1: Subcritical.}} If $\varphi$ has subcritical exponential growth \eqref{varphi-e-sub} and satisfies $(\varphi_0)$, $(\varphi_1)$ and
\begin{equation}\label{varphi-oriLam}
    \limsup_{s\to 0^{+}}\frac{(k+1)\Phi(x,s)}{\vert s\vert ^{k+1}}<\Lambda_1, \;\;\mbox{uniformly for}\;\;x\in \overline{B}
\end{equation}
where $\Phi(x,s)=\int_{0}^{s}\varphi(x,\tau)d\tau$ and $\Lambda_1$ is given by \eqref{Lambda1}.
\paragraph{\textbf{Case~2: Critical.}} If $\varphi$ has critical exponential growth \eqref{e-cri} and satisfies $(\varphi_0)$, $(\varphi_1)$, $(\varphi_2)$, \eqref{varphi-oriLam} and the estimate
\begin{equation}\label{min-max}
    \lim_{s \rightarrow \infty}\vert s\vert \varphi(x,s)e^{-\alpha_0 \vert s\vert ^{\frac{n+2}{n}}}=b_0>\dfrac{1}{e^{1+\frac{1}{2}+\cdots+\frac{1}{k}}}\left(\dfrac{\alpha_n}{\alpha_0}\right)^{\frac{n}{2}}\dfrac{n}{\tau},\;\mbox{uniformly for}\;x \in \overline{B}
\end{equation}
where $\alpha_n=n[\frac{\omega_{n-1}}{k}\binom{n-1}{k-1}]^{\frac{2}{n}}=nc_n^{\frac{2}{n}}$ is the $k$-Hessian critical Moser's constant (cf. \cite{Tian}) and $\tau=\omega_{n-1}$ is the surface area of the unit sphere in $\mathbb{R}^n$.

Since the hypotheses on $\varphi$ in the equation \eqref{E-} were considered on $\overline{B}\times[0,\infty)$ and our problem here \eqref{equação 1.1} is posed on $\overline{B}\times(-\infty, 0]$, we need to translate that assumptions  for this context. The next two lemmas are just to fulfill this role.

\begin{lemma}\label{lf1} Let $h:\overline{B}\times \mathbb{R}\to \mathbb{R}$ be a continuous function  and set $H(x,s)=\int_{s}^{0}h(x,\tau)d\tau$. Then, the function $\varphi:\overline{B}\times \mathbb{R}\to \mathbb{R}$ given by $\varphi(x,s)=h(x,-s)$ satisfies
    \begin{equation}\nonumber
    \left\{\begin{aligned}
        &\lim_{s\to+\infty}\vert \varphi(x,s)\vert e^{-\alpha\vert s\vert^{\frac{n+2}{n}}}=\lim_{s\to-\infty}\vert h(x,s)\vert e^{-\alpha\vert s\vert ^{\frac{n+2}{n}}}\\
        &\lim_{s\to+\infty}s\varphi(x,s)e^{-\alpha\vert s\vert^{\frac{n+2}{n}}}=\lim_{s\to-\infty}\vert s\vert h(x,s)e^{-\alpha\vert s\vert^{\frac{n+2}{n}}}\\
        &\limsup_{s\to 0^{+}}\frac{(k+1)\Phi(x,s)}{\vert s\vert^{k+1}}=\limsup_{s\to 0^{-}}\frac{(k+1)H(x,s)}{\vert s\vert^{k+1}},
        \end{aligned}\right.
    \end{equation}
where $\Phi(x,s)=\int_{0}^{s}\varphi(x,\tau)d\tau$.
\end{lemma}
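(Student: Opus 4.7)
The plan is to reduce all three identities to the single change of variable $t=-s$, combined with the defining relation $\varphi(x,s)=h(x,-s)$ and a companion primitive identity $\Phi(x,s)=H(x,-s)$.

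First, for the two pointwise limit identities I would set $t=-s$, so that $s\to+\infty$ corresponds to $t\to-\infty$ and $\vert s\vert^{(n+2)/n}=\vert t\vert^{(n+2)/n}$. Since $\vert\varphi(x,s)\vert=\vert h(x,t)\vert$, the first identity follows at once. For the second, the observation that $s=\vert t\vert$ when $s>0$ converts $s\,\varphi(x,s)$ into $\vert t\vert\,h(x,t)$, giving the equality immediately.

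The core of the argument is then the primitive identity
\[
\Phi(x,s)=\int_{0}^{s} h(x,-\tau)\,d\tau=\int_{-s}^{0} h(x,u)\,du=H(x,-s),
\]
obtained by the substitution $u=-\tau$, whose reversal of orientation absorbs the sign $du=-d\tau$. With this identity at hand, setting again $t=-s$ sends $s\to 0^{+}$ to $t\to 0^{-}$ and preserves $\vert s\vert^{k+1}=\vert t\vert^{k+1}$, so the third ($\limsup$) identity follows directly by simply rewriting both quotients in terms of $t$.

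The proof is essentially mechanical; the only subtlety, and the main source of potential error, is matching the integration conventions between the two primitives. The definition $H(x,s)=\int_{s}^{0}h(x,\tau)\,d\tau$ uses the opposite orientation to $\Phi(x,s)=\int_{0}^{s}\varphi(x,\tau)\,d\tau$, and this opposite convention is precisely what makes $\Phi(x,s)=H(x,-s)$ hold with no extra sign. Once this intermediate identity is recorded, all three assertions of the lemma are immediate consequences of the reflection $t=-s$.
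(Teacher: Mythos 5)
Your proof is correct and takes essentially the same approach as the paper: you establish the primitive identity $\Phi(x,s)=H(x,-s)$ (the paper records the equivalent $H(x,s)=\Phi(x,-s)$) and then derive all three assertions from the reflection $t=-s$.
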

\begin{proof}
Note that for each $s\in\mathbb{R}$ one has
    \begin{equation}\label{HPhi}
        H(x,s)=-\int_{-s}^{0}h(x,-t)dt=\int_{0}^{-s}h(x,-t)dt=\int_{0}^{-s}\varphi(x,t)dt=\Phi(x,-s).
    \end{equation}
Thus, the result follows from  the change of variables $t=-s$.
\end{proof}

\begin{lemma}\label{lf2} Let $h:\overline{B}\times \mathbb{R}\to \mathbb{R}$ be a continuous function and consider the following conditions:
\begin{enumerate}
\item [$(h_0)$] $h\ge 0$ on $\overline{B}\times \mathbb{R}$, $h(\cdot, s)$ is a radially symmetric function and $h(x,s)=0$ for $(x,s)\in\overline{B}\times[0,\infty)$
\item [$(h_1)$] There exist $\vartheta>k+1$, $0<r_1<r_2<1$ and $s_0>0$ such that for $s<-s_0$
\begin{equation}\nonumber
\begin{aligned}
         \vartheta H(x,s)\le \vert s\vert h(x,s)\;\mbox{if} \; x\in\overline{B}\; \mbox{and} \; H(x,s)>0 \;\mbox{if} \; x\in B_{r_2}\setminus B_{r_1},
        \end{aligned}
    \end{equation}
    with $H(x,s)=\int_{s}^{0}h(x,\tau)d\tau.$
    \item [$(h_2)$] There exist $L>0$ and $M>0$ such that $0<H(x,s)\le M h(x,s)$, for $s<-L$ and $x\in\overline{B}$. 
    \end{enumerate}
    Then, $\varphi:\overline{B}\times \mathbb{R}\to \mathbb{R}$ defined by $\varphi(x,s)=h(x,-s)$ is such that each condition $(h_i)$ implies the corresponding hypothesis $(\varphi_i)$, for $i=0,1,2$.
\end{lemma}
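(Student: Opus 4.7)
The plan is straightforward: the lemma is a routine translation of the three hypotheses under the reflection $s \mapsto -s$, and the key computational identity has already been isolated in Lemma~\ref{lf1}, namely $\Phi(x,s) = H(x,-s)$. I would verify each implication $(h_i)\Rightarrow(\varphi_i)$ separately, in order.

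First, for $(h_0)\Rightarrow(\varphi_0)$: continuity of $\varphi(x,s)=h(x,-s)$ follows from continuity of $h$ by composition with the continuous map $(x,s)\mapsto(x,-s)$; non-negativity and radial symmetry in $x$ are preserved trivially because the reflection acts only on the second variable. For the vanishing condition, I would observe that $\varphi(x,s)=0$ precisely when $h(x,-s)=0$, and $(h_0)$ gives $h(x,t)=0$ for $t\ge 0$; thus $\varphi(x,s)=0$ whenever $-s\ge 0$, i.e., $s\in(-\infty,0]$, which is exactly $(\varphi_0)$.

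Next, for $(h_1)\Rightarrow(\varphi_1)$: take the same constants $\vartheta>k+1$, $0<r_1<r_2<1$ and $s_0>0$ furnished by $(h_1)$. For $s>s_0$ one has $-s<-s_0$, so by $(h_1)$ and the identity $\Phi(x,s)=H(x,-s)$ from Lemma~\ref{lf1},
\begin{equation}\nonumber
\vartheta\Phi(x,s)=\vartheta H(x,-s)\le |-s|\,h(x,-s)=s\,\varphi(x,s),\qquad x\in\overline{B}.
\end{equation}
The positivity clause follows in the same way: for $s>s_0$ and $x\in B_{r_2}\setminus B_{r_1}$, $\Phi(x,s)=H(x,-s)>0$.

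Finally, for $(h_2)\Rightarrow(\varphi_2)$: keeping the same $L,M>0$, if $s>L$ then $-s<-L$, and $(h_2)$ together with $\Phi(x,s)=H(x,-s)$ and $\varphi(x,s)=h(x,-s)$ yields $0<\Phi(x,s)\le M\varphi(x,s)$ for all $x\in\overline{B}$. No step here presents a real obstacle; the only care needed is to track the sign conventions carefully and invoke the integral identity from Lemma~\ref{lf1} rather than recomputing it. The proof is therefore essentially bookkeeping: reflect the thresholds ($s_0$, $L$) across zero and rewrite each assumption on $h$ via the substitution $t=-\tau$ in the defining integrals of $H$ and $\Phi$.
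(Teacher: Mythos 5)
Your proof is correct and follows essentially the same route as the paper's: both reduce each implication to the identity $\Phi(x,s)=H(x,-s)$ (equivalent to \eqref{HPhi}), reflect the thresholds $s_0$ and $L$ across zero, and transcribe the hypotheses. No gaps.
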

\begin{proof} Of course, $(h_0)$ implies $(\varphi_0)$. In addition, from \eqref{HPhi} we also have  $H(x,-s)=\Phi(x,s).$ Thus, for $s>s_0$ we have $-s<-s_0$ and from $(h_1)$, we get 
$\vartheta H(x,-s)\le \vert s\vert h(x,-s)$ if $x\in\overline{B}$ and  $H(x,-s)>0$ if $ x\in B_{r_2}\setminus B_{r_1}$ or equivalently $\vartheta\Phi(x,s)\le s\varphi(x,s)$ if $x\in\overline{B}$ and $\Phi(x,s)>0$ if $ x\in B_{r_2}\setminus B_{r_1}$. This proves  that $(h_1)$ implies $(\varphi_1)$. Analogously, for $s>L$ we have $-s<-L$ and $(h_2)$ yields $(\varphi_2)$.
\end{proof}
\begin{lemma}\label{H-FG} Assume $(H_g)$ and $(\mathcal{H}_f)$. Let $h$ be  given by \eqref{h-transformed} and $H(x,s)=\int_{s}^{0}h(x,\tau)d\tau$. Then
\begin{enumerate}
    \item [$(a)$] \begin{equation}\nonumber
\begin{aligned}
   \lim_{s\to-\infty} \vert h(x,s)\vert e^{-\alpha\vert s\vert ^{\frac{n+2}{n}}}
  &= \lim_{z\to-\infty}e^{kG(z)-\alpha\vert A_{g}(z)\vert ^{\frac{n+2}{n}}}f(x,z).
   \end{aligned}
\end{equation}
\item [$(b)$]  \begin{equation}\nonumber
\begin{aligned}
   \limsup_{s\to 0^{-}}\frac{(k+1)H(x,s)}{\vert s\vert ^{k+1}}
   \le \limsup_{t\to 0^{-}}\frac{e^{kG(t)}f(x,t)}{(\int_{t}^{0}e^{G(t)}d \tau)^{k}}.
   \end{aligned}
\end{equation}
\item [$(c)$] If $f,g$ satisfies $(\mathcal{H}_{AR})$ then  for any $s<A_{g}(-z_0)$ we have
\begin{equation}\nonumber
    \begin{aligned}
    \vartheta H(x,s)\le \vert s\vert h(x,s) \;\;\mbox{if}\; x\in \overline{B}\;\;\mbox{and}\;\; H(x,s)>0\;\;\mbox{if}\;\;x\in B_{r_2}\setminus B_{r_{1}}.
    \end{aligned}
\end{equation}
\item [$(d)$] If $f,g$ satisfies $(\mathcal{H}_{AR_1})$ then  $0<H(x,s)\le M h(x,s)$, for $s<A_{g}(-L)$ and $x\in\overline{B}$.
\end{enumerate}
\end{lemma}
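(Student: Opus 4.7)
The unifying idea is that all four statements become transparent once we pass from the variable $s$ in $h$ and $H$ to the variable $z=A_g^{-1}(s)$. Since $A_g(z)=-\int_z^0 e^{G(\tau)}d\tau$ with $A_g^{\prime}(z)=e^{G(z)}$, the substitution $\tau=A_g(t)$ gives the three core identities
\begin{equation}\nonumber
h(x,s)=e^{kG(z)}f(x,z),\qquad |s|=\int_{z}^{0}e^{G(\tau)}d\tau,\qquad H(x,s)=\int_{z}^{0}e^{(k+1)G(t)}f(x,t)\,dt,
\end{equation}
where the last one comes from inserting $ds=e^{G(t)}\,dt$ into $H(x,s)=\int_{s}^{0}h(x,\tau)d\tau$. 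The plan is to record these identities once and then unpack each of the four assertions.

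For (a), I would simply substitute the first two identities into $|h(x,s)|e^{-\alpha|s|^{(n+2)/n}}$; since $A_g^{-1}$ is a homeomorphism of $(-\infty,0]$ onto itself with $z\to-\infty$ as $s\to-\infty$, the claimed limit identity drops out immediately. For (c) and (d), the same substitution turns the integral/pointwise inequalities in $(\mathcal{H}_{AR})$ and $(\mathcal{H}_{AR_1})$ literally into the desired inequalities in $(s,x)$-variables, with the threshold $z<-z_0$ (resp.\ $z<-L$) translating to $s<A_g(-z_0)$ (resp.\ $s<A_g(-L)$); the positivity statement $H(x,s)>0$ on the annulus follows from the second line of $(\mathcal{H}_{AR})$ by the same substitution.

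The nontrivial part is (b), where a genuine inequality (not an identity) is asserted. Setting $F(t):=\int_{t}^{0}e^{G(\tau)}d\tau$, note that $F^{\prime}(t)=-e^{G(t)}$, so
\begin{equation}\nonumber
e^{G(t)}F(t)^k = -F(t)^k F^{\prime}(t) = -\tfrac{1}{k+1}\tfrac{d}{dt}\bigl[F(t)^{k+1}\bigr].
\end{equation}
Let $L$ denote the right-hand limsup in (b); we may assume $L<\infty$. Fix $\varepsilon>0$; by the definition of limsup and by $(\mathcal{H}_f)$, there exists $t_0<0$ such that $e^{kG(t)}f(x,t)\le (L+\varepsilon)F(t)^k$ for $t_0<t<0$, uniformly in $x\in\overline{B}$. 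Multiplying by $e^{G(t)}$ and integrating from $T=A_g^{-1}(s)\in(t_0,0)$ to $0$, the antiderivative identity above yields
\begin{equation}\nonumber
\int_{T}^{0}e^{(k+1)G(t)}f(x,t)\,dt \;\le\; \tfrac{L+\varepsilon}{k+1}\,F(T)^{k+1},
\end{equation}
which is exactly $(k+1)H(x,s)\le (L+\varepsilon)|s|^{k+1}$ after substituting the core identities. Taking $\limsup_{s\to 0^-}$ and then $\varepsilon\to 0^+$ delivers (b).

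The only step that requires any care is (b): it is tempting to invoke L'Hôpital directly on the quotient $(k+1)H(x,s)/|s|^{k+1}$, but since only a limsup (not a limit) is assumed on the right, one needs the explicit antiderivative computation above to transfer a pointwise bound on $e^{kG(t)}f(x,t)/F(t)^k$ into an integral bound on $H$. Everything else reduces to bookkeeping with the change of variables $z=A_g^{-1}(s)$ and the boundary behavior $A_g(0)=0$, $A_g(-\infty)=-\infty$ already recorded in the proof of Proposition~\ref{proposição 2.1}.
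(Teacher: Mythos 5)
Your proof is correct, and for parts $(a)$, $(c)$, $(d)$ it is essentially identical to the paper's: record the three identities that the change of variables $z=A_g^{-1}(s)$ produces and then read off each claim. On part $(b)$ your route differs slightly. The paper invokes the one-sided L'H\^opital rule, namely the standard fact that $\limsup (F_1/F_2)\le \limsup(F_1^\prime/F_2^\prime)$ for a $0/0$ indeterminate form with $F_2$ monotone, applied after the substitution $t=A_g^{-1}(s)$. You instead re-derive that inequality from scratch in this specific instance: fix $\varepsilon>0$, turn the limsup bound into a pointwise bound near $0^-$, multiply by $e^{G(t)}=-F^\prime(t)$ to get an exact antiderivative, integrate, and send $\varepsilon\to 0^+$. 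That is a valid and somewhat more elementary argument (it avoids citing the limsup variant of L'H\^opital, which not everyone keeps at hand), at the price of a few extra lines. Your closing remark that ``one cannot invoke L'H\^opital directly since only a limsup is assumed'' is a bit too strong---the limsup version of L'H\^opital does hold and is exactly what the paper uses---but your explicit derivation is a correct substitute and even makes the mechanism clearer. One minor overshoot: you claim the pointwise bound $e^{kG(t)}f(x,t)\le (L+\varepsilon)F(t)^k$ holds uniformly in $x\in\overline B$, but the statement of $(b)$ is pointwise in $x$ and your argument goes through for each fixed $x$ without that uniformity, so nothing is lost.
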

\begin{proof}
$(a).$ By setting $z=A^{-1}_{g}(s)$ we have $z\to-\infty$ if $s\to -\infty$. Hence 
\begin{equation}\nonumber
\begin{aligned}
   \lim_{s\to-\infty} \vert h(x,s)\vert e^{-\alpha\vert s\vert ^{\frac{n+2}{n}}}& = \lim_{s\to-\infty}e^{kG(A^{-1}_{g}(s))-\alpha\vert s\vert ^{\frac{n+2}{n}}}f(x,A^{-1}_{g}(s))\\
   &= \lim_{z\to-\infty}e^{kG(z)-\alpha\vert A_{g}(z)\vert ^{\frac{n+2}{n}}}f(x,z).
   \end{aligned}
\end{equation}
$(b).$ Analogously, since we have $\limsup (f/g)\le \limsup(f^{\prime}/g^{\prime})$ it follows that
\begin{equation}\nonumber
\begin{aligned}
   \limsup_{s\to 0^{-}}\frac{(k+1)H(x,s)}{\vert s\vert ^{k+1}}&=\limsup_{s\to 0^{-}}\frac{(k+1)\int_{A^{-1}_{g}(s)}^{0}e^{(k+1)G(z)}f(x,z)dz}{\vert s\vert ^{k+1}}\\
   &=\limsup_{t\to 0^{-}}\frac{(k+1)\int_{t}^{0}e^{(k+1)G(z)}f(x,z)dz}{(\int_{t}^{0}e^{G(\tau)}d \tau)^{k+1}}\\
   &\le \limsup_{t\to 0^{-}}\frac{e^{kG(t)}f(x,t)}{(\int_{t}^{0}e^{G(t)}d \tau)^{k}}.
   \end{aligned}
\end{equation}
Note that 
\begin{equation}\label{e-H}
  H(x,s)= \int_{s}^{0}e^{kG(A^{-1}_{g}(t))}f(x,A^{-1}_{g}(t)) dt= \int_{A^{-1}_{g}(s)}^{0}e^{(k+1)G(t)}f(x,t)dt.
\end{equation}
$(c).$ For any $s<A_{g}(-z_0)$  and $x\in\overline{B}$ the assumption $(\mathcal{H}_{AR})$ and \eqref{e-H} yield
\begin{equation}\nonumber
    \begin{aligned}
    \vartheta H(x,s)&=\vartheta \int_{A^{-1}_{g}(s)}^{0}e^{(k+1)G(t)}f(x,t)ds  \\
    &\le e^{kG(A^{-1}_{g}(s))}f(x,A^{-1}_{g}(s))\int_{A^{-1}_{g}(s)}^{0}e^{G(t)}dt\\
    &=\vert s\vert h(x,s)
    \end{aligned}
\end{equation}
and $H(x,s)>0$, for $x\in B_{r_2}\setminus B_{r_1}$. 

\noindent $(d).$  For $s<A_{g}(-L)$ and $x\in\overline{B}$,  $(\mathcal{H}_{{AR}_{1}})$ and \eqref{e-H} yield
\begin{equation}\nonumber
    \begin{aligned}
   0< H(x,s)=\int_{A^{-1}_{g}(s)}^{0}e^{(k+1)G(t)}f(x,t)dt\le Me^{kG(A^{-1}_{g}(s))}f(x,A^{-1}_{g}(s))=Mh(x,s).
    \end{aligned}
\end{equation}
\end{proof}

\begin{proof}[Proof of Theorem~\ref{thm2}]
    We will combine Proposition~\ref{proposição 2.1} with the existence result  for subcritical growth,  see \textbf{Case~1} above. Thus, by using Lemma~\ref{lf1} and Lemma~\ref{lf2} we only need to check that the transformed function $h$ in \eqref{h-transformed} satisfies  $(h_0), (h_1)$, the condition \eqref{varphi-e-sub} with $s\to -\infty$ instead of $s\to +\infty$, and the assumption \eqref{varphi-oriLam} with $s\to 0^{-}$ and $H(x,s)=\int_{s}^{0}h(x,\tau)d\tau$. First, from $(H_g)$ and $(\mathcal{H}_f)$, the function $h(x,s)$ is continuous and nonnegative on $\overline{B}\times (-\infty, 0]$. Also, since $A^{-1}_g(0)=0$ and $f(x,0)=0$ we have $h(x, 0)=0$. Thus, by replacing $h$ by its continuous extension to be zero on $\overline{B}\times [0,\infty)$ we can assume $(h_0)$.  The condition $(h_1)$ follows from Lemma~\ref{H-FG}-$(c)$. The subcritical condition \eqref{varphi-e-sub} follows from \eqref{e-sub} and  Lemma~\ref{H-FG}-$(a)$. Finally,  Lemma~\ref{H-FG}-$(b)$ and \eqref{super00} yield \eqref{varphi-oriLam}.
\end{proof}
 
\begin{proof}[Proof of Theorem~\ref{thm3}]
    Here, we also combine Proposition~\ref{proposição 2.1} with the existence result  for critical growth,  see \textbf{Case~2} above. As in the previous argument, from Lemma~\ref{lf1} and Lemma~\ref{lf2} it is sufficient to verify that $h$ in \eqref{h-transformed} satisfies the hypotheses $(h_0), (h_1)$, $(h_2)$, the critical growth condition \eqref{e-cri} and the estimate \eqref{min-max} with $s\to -\infty$ instead of $s\to +\infty$,  and the origin assumption \eqref{varphi-oriLam} with $s\to 0^{-}$. The hypotheses $(h_0)$ and $(h_1)$ and the condition \eqref{varphi-oriLam}  were verified in the proof of Theorem~\ref{thm2} above. In addition, $(h_2)$ follows from Lemma~\ref{H-FG}-$(d)$. Finally, the estimate \eqref{min-max} and the critical condition \eqref{e-cri} follow from  
 \eqref{fe-cri}, \eqref{fmin-max} and  Lemma~\ref{H-FG}-$(a)$.
\end{proof}

\bigskip
\bigskip

\begin{thebibliography}{999999}

 \bibitem{MET2}Abdellaoui, B., Dall'Aglio, A., Peral, I. \textit{Some remarks on elliptic problems with critical growth in the gradient}. J. Differential Equations.  \textbf{222} (2006), no. 1, 21-62.
  

  \bibitem{caffarelli}Caffarelli, L., Nirenberg, L., Spruck, J. \textit{The Dirichlet problem for nonlinear second elliptic equations, I. Monge-Ampere equations}. Comm. Pure Appl. Math. \textbf{37} (1984), 369-402.

  \bibitem{sobre k-hess3}Chou, K. S., Wang, X.-J. \textit{Variational theory for Hessian equations}. Comm. Pure Appl. Math. \textbf{54} (2001), 1029-1064.
  
  \bibitem{ubilla}De Figueiredo, D. G., Gossez, J. P., Quoirin, H. R., Ubilla, P. \textit{Elliptic equations involving the $p$-Laplacian and a gradient term having natural growth}. Rev. Mat. Iberoam. \textbf{35} (2019),  173-194
  
  
  \bibitem{DMRuf}
  De Figueiredo, D. G., Miyagaki, O. H., Ruf, B. \textit{Elliptic equations in $\mathbb{R}^2$ with nonlinearities in the critical growth range}. Calc. Var. Partial Differential Equations. \textbf{3} (1995),  139-153

  
  \bibitem{sobre k-hess2} 
  De Oliveira, J.F., Do \'{O}, J. M., Ubilla, P. \textit{Existence for a $k$-Hessian equation involving suprcritical growth}. J.  Differential Equations. \textbf{267} (2019),  1001-1024.

  \bibitem{crit e subcritico} De Oliveira, J.F., Ubilla. P. \textit{Admissible solutions to Hessian equations with exponential growth.} Revista Matemática Iberoamericana.  \textbf{37} (2021),  747-773.

  
  \bibitem{MET14}García-Melían, J., Iturriaga, L., Quoirin, H. R. \textit{A priori bounds and existence of solutions for slightly superlinear elliptic problems}. Adv. Nonlinear Stud. \textbf{15} (2015), no. 4,  923-938.

  \bibitem{B.Guan}Guan, B. \textit{The Dirichlet problem for Hessian equations on Riemannian manifolds}. Calc. Var. Partial Differential Equations. \textbf{8} (1999),  45-69.

  \bibitem{G.Dai}Guowei, D. \textit{Bifurcation and admissible solutions for the Hessian equation}. J. Funct. Anal. \textbf{273} (2017),  3200-3240.

  \bibitem{MET1}Hamid, H. A., Bidaut-Veron, M. F. \textit{On the connection between two quasilinear elliptic problems with source terms of order $0$ or $1$}. Commun. Contemp.Math. \textbf{12} (2010), no. 5, 727-788.

\bibitem{Horn} Horn, R.A., Johnson, C.R. \textit{Matrix Analysis}. Cambridge University Press, New York, 2013

  \bibitem{JEANJEAN 1}Jeanjean, L., Quoirin, H. R \textit{Multiple solutions for an indefinite elliptic problem with critical growth in the gradient}. Proc. Amer. Math. Soc. \textbf{144} (2016), no. 2,  575-586.

  \bibitem{JEANJEAN 2}Jeanjean, L., Sirakov, B.  \textit{Existence and multiplicity for elliptic problems with quadratic growth in the gradient}. Comm. Partial Differential Equations. \textbf{38} (2013), no. 2,  244-264.

 
  \bibitem{kazdan}Kazdan, J. L. Kramer, R. J. \textit{Invariant criteria for existence of solutions to second--order quasilinear elliptic equations}. Comm. Pure Appl. Math. \textbf{31} (1978), no. 5,  619-645.

  \bibitem{MET21}Li, J., Yin, J., Ke, Y. \textit{Existence of positive solutions for the p-Laplacian with p-gradient term}. J. Math. Anal. Appl. 
  \textbf{383} (2011), no. 1,  147-158.
  
  
    
  \bibitem{N.C} Phuc, N. C., Verbisky, I. E. \textit{Quasilinear and Hessian equations of Lane-Emden type}.  Ann. of Math. \textbf{168} (2008), 859-914.


\bibitem{N.C2} Phuc, N. C., Verbisky, I. E. \textit{Singular quasilinear and Hessian equations and inequalities}.  J. Funct. Anal.  \textbf{256}  (2009), 1875-1906


   \bibitem{MET30}Porretta, A., Segura de Leon, S. \textit{Nonlinear elliptic equations having a gradient term with natural growth}. J. Math. Pures Appl. (9) \textbf{85} (2006), no. 3,  465-492.

\bibitem{Pucci-Serrin}
Pucci,  P., Serrin,  J. \textit{A General Variational Identity}. Indiana Univ. Math. J. \textbf{35} (1986),  681-703.

  
  \bibitem{wsheng}Sheng, W.M., Trudinger, N .S., Wang, X. J. \textit{The Yamabe problem for higher order curvatures}. J. Differential Geom. \textbf{77} (2007), 515-553.
  

  \bibitem{Tian}Tian, G.-T., Wang, X.-J. \textit{Moser–Trudinger type inequalities for the Hessian equation}. J. Funct. Anal. \textbf{259} (2010), no. 8, 1974-2002.

\bibitem{TR1} Trudinger, N.S.,  Wang, X-J. \textit{Hessian measures II}.  Annals of Mathematics.  \textbf{150} (1999), 579-604

   \bibitem{tso}Tso, K. \textit{Remarks on critical exponents for Hessian operators}, Ann. Inst. H. Poincaré Anal. Non Linéaire. \textbf{7} (1990), 113-122.
  
  \bibitem{sobre k-hess5}Wei, W. \textit{Existence and multiplicity for negative solutions of k-Hessian equations}. J. Differential Equations. \textbf{263} (2017) 615-640.
  
  \bibitem{sobre k-hess6}Wei, W. \textit{Uniqueness theorems for negative radial solutions of k-Hessian equations in a ball}, J. Differential Equations. \textbf{261} (2016), 3756-3771.
  
  \bibitem{wang}Wang, X.-J. \textit{The k-Hessian Equation}, Lecture Notes in Math., \textbf(1977), Springer (2009), 177-252.
  
  \bibitem{citekey 3} Wang, X.-J. \textit{A Class of Fully Nonlinear Elliptic Equations and Related Functionals}, Indiana Univ. Math. J. \textbf{43} (1994), no. 1,  25-54.
  
  \bibitem{sobre k-hess4} Zhang, X., Feng, M. \textit{The existence and asymptotic behavior of boundary blow-up solutions to the k-Hessian equation}. J. Differential Equations. \textbf{267} (2019), 4626-4672.
  
\end{thebibliography}
\end{document}